\tikzset{anchorbase/.style={baseline={([yshift=-0.5ex]current bounding box.center)}}}
\tikzstyle directed=[postaction={decorate,decoration={markings,
    mark=at position #1 with {\arrow{>}}}}]
\tikzstyle rdirected=[postaction={decorate,decoration={markings,
    mark=at position #1 with {\arrow{<}}}}]
 \newlength{\baseunit}               
\newtheorem{thm}{Theorem}
\newtheorem{theorem}[subsubsection]{Theorem}
\newtheorem{lemma}[theorem]{Lemma}
\newtheorem{prop}[theorem]{Proposition}
\newtheorem{corollary}[subsubsection]{Corollary}
\theoremstyle{definition}
\newtheorem{definition}[subsubsection]{Definition}
\newtheorem{example}[subsubsection]{Example}
\newcommand{\cO}{\mathcal{O}}
\newcommand{\dd}{\mathbf{d}}
\newcommand{\mN}{\mathbb{N}}
\newcommand{\mZ}{\mathbb{Z}}
\newcommand{\mC}{\mathbb{C}}
\newcommand{\fg}{\mathfrak{g}}
\newcommand{\fb}{\mathfrak{b}}
\newcommand{\fh}{\mathfrak{h}}
\newcommand{\Le}{\mathcal{L}}
\newcommand{\End}{\mathrm{End}}
\newcommand{\Hom}{\mathrm{Hom}}
\newcommand{\VX}{V\hspace{-1mm}X}
\newcommand{\cG}{\mathcal{G}}
\newcommand{\op}{\mathrm{op}}
\newcommand{\Ind}{\mathrm{Ind}}
\begin{document}
\title[Blocks in category $\cO$]{The classification of blocks in BGG category $\cO$}
\author{Kevin Coulembier}
\address{K.C.: School of Mathematics and Statistics, University of Sydney, F07, NSW 2006, Australia}
\email{kevin.coulembier@sydney.edu.au}


\keywords{BGG category $\cO$, equivalences, Bruhat order, quasi-hereditary algebras}

\begin{abstract}
We classify all equivalences between the indecomposable abelian categories which appear as blocks in BGG category $\cO$ for reductive Lie algebras. Our classification implies that a block in category $\cO$ only depends on the Bruhat order of the relevant parabolic quotient of the Weyl group. As part of the proof, we observe that any finite dimensional algebra with simple preserving duality admits at most one quasi-hereditary structure.
\end{abstract}

\maketitle


\section*{Introduction}
Fix a reductive Lie algebra $\fg$ over $\mC$ with Cartan and Borel subalgebra $\fh\subset \fb$. The abelian category $\cO(\fg,\fb)$ of $\fg$-modules associated to this data was introduced by Bernstein, Gelfand and Gelfand in \cite{BGG}. The simple modules are labelled by $\fh^\ast$. The indecomposable integrable blocks are described by orbits of the Weyl group $W=W(\fg:\fh)$ in $\fh^\ast$, and non-integrable blocks by orbits of the relevant integral Weyl subgroups. In \cite{So}, Soergel proved that a block in $\cO(\fg,\fb)$ is, up to equivalence, determined by the data of the relevant integral Weyl subgroup $U< W$ (viewed as a Coxeter system) and the parabolic subgroup $U'<U$ which `stabilises' the block. In particular, this shows that every non-integral block is equivalent to an integral block of a different Lie algebra. Furthermore, we can unambiguously write $\cO(U,U')$ to denote the block.

It is known that there are more equivalences between blocks in $\cO$ than described by Soergel's theorem. A trivial example is given by the maximally singular block $\cO(W,W)$, which is equivalent to the category of vector spaces, for each Weyl group $W$. More refined examples are listed in Theorem~2 below. Our main result is the following theorem.
\begin{thm}
Consider finite Weyl groups $W,U$ with parabolic subgroups $W'<W$ and $U'<U$. The categories $\cO(W,W')$ and $\cO(U,U')$
 are equivalent if and only if the partially ordered sets $(W/W',\le_B)$ and $(U/U',\le_B)$ are isomorphic, with $\le_B$
the Bruhat order.\end{thm}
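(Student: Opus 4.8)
The plan is to prove the two implications by very different means: the forward direction, that an equivalence forces a poset isomorphism, should follow from the structural rigidity of these blocks, whereas the converse, constructing an equivalence out of a poset isomorphism, will be the substantive part. The guiding idea is that the Bruhat order is an intrinsic invariant of the abelian category, recoverable from the highest weight structure together with the quoted uniqueness result.

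For the forward direction, suppose $F\colon \cO(W,W')\to \cO(U,U')$ is an equivalence of abelian categories. Each block is the module category of a finite dimensional quasi-hereditary algebra carrying a simple preserving duality. The functor $F$ transports the quasi-hereditary structure of the source to one on the target; by the uniqueness statement quoted above (applicable since both algebras admit such a duality) this transported structure must coincide with the target's own. Hence $F$ matches standard objects with standard objects and simples with simples, and therefore preserves every decomposition multiplicity $[\Delta(\lambda):L(\mu)]$. The remaining point is the combinatorial fact that, in such a block, $\mu\le_B\lambda$ if and only if $[\Delta(\lambda):L(\mu)]\neq 0$: this is the nonvanishing property of the relevant (singular) Kazhdan--Lusztig polynomials, triangularity giving one direction and the nonzero constant term on comparable pairs the other. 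Thus the Bruhat order on $W/W'$ is recovered as the order generated by nonzero decomposition numbers, and $F$ induces an isomorphism $(W/W',\le_B)\cong (U/U',\le_B)$.

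For the converse I would first reduce to irreducible $W$. A product decomposition $W=W_1\times W_2$ induces a tensor factorisation of the block and a product factorisation $(W/W',\le_B)\cong (W_1/W_1',\le_B)\times(W_2/W_2',\le_B)$ of the poset; since these posets have both a least and a greatest element, unique factorisation into directly indecomposable factors lets a poset isomorphism be matched factor by factor. It then suffices to construct an equivalence when the posets are directly indecomposable. Given such an isomorphism $(W/W',\le_B)\cong (U/U',\le_B)$, the goal is to produce an exact functor respecting the highest weight structure. By Soergel's theorem the block depends only on the Coxeter data $(W,W')$, and its endomorphism algebra is computed combinatorially through the functor $\mathbb{V}$ into modules over the coinvariant algebra, that is, via Soergel bimodules; the strategy is to show this combinatorial datum is already determined by the Bruhat poset, so that isomorphic posets force isomorphic algebras, with the genuinely distinct Coxeter cases reproducing the exceptional equivalences of Theorem~2.

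The main obstacle is exactly this last step. Isomorphic Bruhat posets can arise from different Coxeter systems, and a priori the graded decomposition data need not be legible from the poset alone, so I expect the heart of the argument to be a classification: for irreducible $W$, enumerate the parabolic quotients $W/W'$ up to poset isomorphism, and verify that every coincidence between distinct cases is realised by an equivalence, either of Soergel type or one of the explicit exceptional ones. Ruling out an ``accidental'' poset isomorphism with no corresponding categorical equivalence, and conversely showing that each genuine coincidence is induced by a functor preserving standards and simples, is where the real difficulty lies.
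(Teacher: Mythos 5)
Your forward direction is, in substance, the paper's own argument: an equivalence of abelian categories transports one quasi-hereditary structure to the other, the uniqueness theorem for algebras with simple preserving duality (Theorem~\ref{ThmQH}) forces the standard modules to match, and since $\mu\le_B\lambda$ holds exactly when $[\Delta(\lambda):L(\mu)]\neq 0$ (the essential order of a block is the Bruhat order, \ref{ResSoergel}, \ref{CorO}), the equivalence induces an isomorphism of posets. That half is correct and complete.

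The converse is where you have a genuine gap, in two places. First, your reduction to irreducible Weyl groups is not sound as stated. Unique direct-product factorisation of finite connected posets (Hashimoto's theorem) matches up \emph{directly indecomposable} poset factors, so to pair off the irreducible Coxeter factors you would need to know that $(W_i^{J_i},\le_B)$ is itself directly indecomposable for every irreducible factor $W_i$; you never prove this, and without it an indecomposable poset factor need not be realised by any Coxeter pair at all, so ``it suffices to treat indecomposable posets'' does not follow --- the construction problem is indexed by Coxeter pairs, not by posets. The paper instead proves the reduction intrinsically (Theorem~\ref{Thm2}, via Proposition~\ref{PropIrr}, which recovers the connected components of the Coxeter graph from a relation $\leadsto$ on the atoms of the poset). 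Second, and more seriously, the entire substance of this direction --- classifying, for irreducible $W,U$, all coincidences $(W^J,\le_B)\simeq(U^K,\le_B)$ and exhibiting an equivalence of blocks for each --- is precisely what you defer as ``the main obstacle''. This is not a routine verification: it is the content of Section~\ref{SecRecover} of the paper (Theorem~\ref{Thm1} recovering $S\setminus J$ and its Coxeter matrix from the poset, Theorem~\ref{ThmNew} reconstructing a bw-Coxeter graph, Lemma~\ref{LemNew} and the length computations in the proof of Theorem~\ref{Thm3} eliminating the residual graph coincidences), after which the equivalences for the four exceptional families are imported from nontrivial known results (Stroppel's explicit descriptions for the small cases, Koszul duality plus the Morita equivalence observed by Ehrig--Stroppel for the two infinite series). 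Your proposed route for this step --- that Soergel's combinatorial functor shows the poset determines the algebra --- is not substantiated and is doubtful on its face: in the exceptional coincidences the Coxeter data, hence the coinvariant algebras and the ambient Soergel-theoretic setups, genuinely differ, while the blocks are nevertheless equivalent. So the proposal proves the easy implication, but for the hard one it offers a programme with a flawed reduction step and leaves its core unexecuted.
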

Of course, if we have an isomorphism of Coxeter groups $W\stackrel{\sim}{\to}U$ which maps $W'$ to $U'$, the two Bruhat orders are isomorphic and we recover Soergel's theorem. All non-trivial isomorphisms of Bruhat orders correspond to the list in Theorem~2 below.

Unfortunately, we only have a conceptual proof of one direction of Theorem~1, contrary to the results in \cite{So}. Concretely, we prove that any finite dimensional algebra with simple preserving duality admits at most one quasi-hereditary structure. This applies to blocks of category $\cO$ and hence any equivalence must be one of highest weight categories. By the BGG theorem, the Bruhat order is an invariant of the highest weight structure of $\cO$. Note that this actually implies a stronger statement than the one in Theorem~1, namely any equivalence between two blocks in $\cO$ induces an isomorphism of Bruhat orders.

To prove the other direction of Theorem~1 we investigate to which extend we can recover the pair $(W,W')$, for an arbitrary Coxeter group $W$ with parabolic subgroup $W'$, from the poset $(W/W',\le_B)$. A lot of information can be reconstructed from general methods. For instance, we show that for label free simple Coxeter graphs, any Coxeter pair can be reconstructed from the Bruhat order on its parabolic quotient. Applying this and other methods to finite Weyl groups shows that isomorphisms between such posets are extremely rare. In all cases where there exists such an isomorphism, the blocks of category $\cO$ are known to be equivalent.

In order to classify blocks in $\cO$, we can restrict to simple Lie algebras, see Theorem~\ref{Thm4} below. The following thus yields a complete classification of blocks in category~$\cO$.

\begin{thm}
Consider two irreducible finite Weyl groups $W,U$ with parabolic subgroups $W'<W$ and $U'<U$. All non-trivial equivalences $\cO(W,W')\simeq\cO(U,U')$, excluding the cases $W=W'$ and $U=U'$, are:
\begin{enumerate}
\item $\cO(A_{2n+1},A_{2n})\;\simeq\; \cO(B_{n+1},B_n),\;\;$ with $n\ge 2$;
\item $\cO(B_n, A_{n-1})\;\simeq\; \cO(D_{n+1},A_n),\;\;$ with $n\ge 3$;
\item $\cO(A_3,A_2)\;\simeq\; \cO(B_2,A_1)$;
\item $\cO(A_5,A_4)\;\simeq\; \cO(G_2,A_1)\;\simeq \cO(B_3,B_2)$.
\end{enumerate}
\end{thm}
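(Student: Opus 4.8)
The plan is to use the reduction furnished by Theorem~1: an equivalence $\cO(W,W')\simeq\cO(U,U')$ holds if and only if the parabolic Bruhat quotients $(W/W',\le_B)$ and $(U/U',\le_B)$ are isomorphic as posets. The entire statement thus becomes a purely combinatorial classification of isomorphisms between Bruhat orders on parabolic quotients of irreducible finite Weyl groups, excluding the one-point posets coming from $W=W'$, and I would work entirely inside the poset.

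First I would extract the invariants of $P=W/W'$ that any poset isomorphism must preserve. The poset is graded by Coxeter length, with a unique minimum $\overline e$ and unique maximum $\overline{w_0}$; hence its rank $\ell(w_0^W)-\ell(w_0^{W'})$, its cardinality $|W|/|W'|$, its number of atoms $|S\setminus S'|$ (the simple reflections outside $W'$), and more generally its whole rank-generating function are invariants. The decisive tool is the rank-generating function, which equals the quotient of Poincaré polynomials
\[
P_{W/W'}(q)=\frac{\prod_{i}[d_i]_q}{\prod_{j}[d'_j]_q},\qquad [d]_q=\frac{q^d-1}{q-1},
\]
with $d_i,d'_j$ the degrees of $W,W'$. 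Since $[d]_q=\prod_{1<k\mid d}\Phi_k(q)$, requiring $P_{W/W'}=P_{U/U'}$ is a matching problem of multisets of cyclotomic polynomials, governed by the top degrees on each side. I would solve this matching to cut the candidate pairs down to a finite, short list.

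The next step separates the two tasks hidden in the theorem. For the negative part (no further equivalences) I would argue that an isomorphism forces equal corank, and, using the reconstruction of a Coxeter pair from its parabolic Bruhat quotient for label-free (simply-laced) graphs mentioned above, that no non-trivial coincidence can occur between two simply-laced pairs; hence every surviving candidate must involve one of the non-simply-laced types $B_n=C_n$, $F_4$, $G_2$, which I would examine directly (for $F_4$ and $G_2$ there are only finitely many parabolics, settled by hand). For the positive part I would exhibit the isomorphisms explicitly: the coincidences (1), (3) and (4) reduce to the elementary fact that the minimal-coset-representative posets $A_{m-1}/A_{m-2}$, $B_k/B_{k-1}$ and $I_2(m)/A_1$ (the last when $I_2(m)$ is crystallographic) are totally ordered, of sizes $m$, $2k$ and $m$ respectively, so they coincide whenever the sizes match (with $G_2/A_1$ furnishing the extra member of the six-element class in~(4)); and the coincidence (2) identifies $B_n/A_{n-1}$ and $D_{n+1}/A_n$ as the common weight poset of the spin and half-spin minuscule representations, both being maximal-parabolic (corank-one) minuscule quotients, which I would match as the same signed/shifted combinatorial poset.

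The main obstacle is the gap between equality of rank-generating functions and genuine poset isomorphism: the cyclotomic matching will produce candidate pairs with identical Poincaré polynomials that are nonetheless non-isomorphic as posets, and weeding out these false positives requires finer, genuinely order-theoretic invariants (number of coatoms, self-duality, the lattice property, or the local structure near the atoms) rather than the graded numerics alone. A secondary difficulty is making the identification in~(2) rigorous, since it rests on the exceptional coincidence of minuscule posets under the folding $D_{n+1}\to B_n$ rather than on any isomorphism of Coxeter data; this is precisely the place where the simply-laced reconstruction fails and the edge-label information is genuinely lost.
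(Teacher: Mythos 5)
The decisive flaw is circularity in your positive half. You reduce the whole statement to combinatorics by quoting Theorem~1 as a two-way bridge, and consequently your verification of items (1)--(4) consists solely of poset isomorphisms (chains of equal cardinality for (1), (3), (4); the minuscule spin poset for (2)). Those isomorphisms are correct, but the direction of Theorem~1 you are using there --- an isomorphism of Bruhat posets implies an equivalence of blocks --- is not an independent input: in the paper it is exactly the implication (3)$\Rightarrow$(4) of Theorem~\ref{Thm4}, and its proof consists of first classifying all poset isomorphisms (Theorem~\ref{Thm3}) and then, for each entry of the resulting list, invoking a genuinely categorical equivalence: the quiver-and-relations descriptions of \cite[Section~5]{St} for $(A_3,A_2)\leftrightarrow(B_2,A_1)$ and $(A_5,A_4)\leftrightarrow(G_2,A_1)\leftrightarrow(B_3,B_2)$, and Koszul duality \cite{BGS} combined with the Morita equivalence of the Koszul dual algebras observed in \cite[\S 1.7]{ES} for the two infinite families. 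The introduction states explicitly that only the other direction (equivalence $\Rightarrow$ order isomorphism, via uniqueness of the quasi-hereditary structure, Theorem~\ref{ThmQH}) has a conceptual proof. So, as written, your argument never produces an equivalence of categories at all; the positive part of the theorem is a statement about abelian categories, and to close the gap you must import (or reprove) these categorical results --- no purely order-theoretic argument can replace them.

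For the negative half your route is legitimately different from the paper's, but unfinished. Using the ``only if'' direction of Theorem~1 is sound (that is Theorem~\ref{ThmQH} together with the fact that the essential order of a block is the Bruhat order), and matching Poincar\'e polynomials is a reasonable first filter; indeed every false positive the paper encounters is already distinguished by the degree of that polynomial, i.e.\ by the length $\Le$ of the poset. But your plan then splits into ``simply-laced reconstruction'' plus ``examine $B_n$, $F_4$, $G_2$ directly'', and the type-$B$ case hides two infinite families with arbitrary parabolic factors, namely $(B_{m+n},P\times A_{n-1})\leftrightarrow(D_{m+n+1},P\times A_n)$ and $(B_{m+n},Q\times B_{n-1})\leftrightarrow(A_{m+2n-1},Q\times A_{2n-2})$, whose bw-Coxeter data genuinely collide; the paper isolates exactly these (plus $(F_4,A_2)\leftrightarrow(D_5,A_3)$ and $(F_4,B_3)\leftrightarrow(E_6,D_5)$) via the graph invariant $\cG(W^J,\le_B)$ reconstructed from the poset (Theorem~\ref{ThmNew}, Lemma~\ref{LemNew}) and then separates them by computing lengths. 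Your proposal acknowledges, but does not resolve, the passage from equality of rank-generating functions to a finite candidate list and from there to non-isomorphism; that is where the actual combinatorial work lies, and it is precisely what the paper's reconstruction theorem accomplishes.
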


The paper is organised as follows. In Section~\ref{SecPrel} we recall the necessary background. In Section~\ref{SecQH} we prove the uniqueness of quasi-hereditary structures on algebras with simple preserving duality and mention an application to cellular algebras. In Section \ref{SecRecover} we study the reconstruction of Coxeter pairs from Bruhat orders, culminating in a classification of finite Weyl group pairs with isomorphic Bruhat order. In Section~\ref{SecO} we apply all of the above to prove Theorems 1 and 2 above. We also mention some potential applications in the study of category $\cO$ for Kac-Moody algebras and Lie superalgebras.

\section{Preliminaries}\label{SecPrel}
We set $\mN=\{0,1,2,\ldots\}$ and $\mN^\infty=\mN\cup \{\infty\}$. We also use $\mZ_{>0}=\{1,2,3,\ldots\}$ and $\mZ_{>0}^\infty=\mZ_{>0}\cup \{\infty\}$.
Whenever we will refer to the cardinality $|E|$ of some set $E$, the latter will be countable. Hence we can unambiguously consider $|E|$ as an element of $\mN^\infty$.

\subsection{Partial orders}

\subsubsection{} Consider a poset $(X,\le)$. 
We say that $a\in X$ is a greatest element if $b\le a$ for every $b\in X$. We say that $a\in X$ is a maximal element if $a\le b$ for $b\in X$ implies $b=a$. We also have the dual notions of least and minimal elements.
For $a,b\in X$, we say that $a$ covers $b$ if $b< a$ and $b\le c\le a$ implies $c\in\{a,b\}$. Then we write $b\lhd a$.
We will always use the same notation for the partial order $\le$ restricted to a subset $Y\subset X$.
 
 \subsubsection{Pointed posets}
A graded poset is a poset $(X,\le)$ is equipped with a rank function $\rho:X\to\mN$ such that 
$$b\le a\mbox{ implies }\rho(b)\le \rho(a)\quad\mbox{and}\quad b\lhd a\mbox{ implies }\rho(b)=\rho(a)-1.$$
We make the additional normalisation assumption that $\rho^{-1}(0)$ is not empty and write $X_i=\rho^{-1}(i)$, for all $i\in\mN$.
The length of a graded poset is defined as
$$\Le(X,\le)\;=\; \sup\{i\in\mN\,|\, X_n\not=0\}\;\in\,\mN^\infty.$$
For example, a finite linear poset $(X,\le)$ can be graded in the obvious way and we have $\Le(X,\le)=|X|-1$.

If a graded poset $(X,\le,\rho)$ has a least element then clearly $X_0$ is the singleton containing the least element. Furthermore, the rank function $\rho$ is then uniquely determined by our convention. We call a poset {\bf pointed} if it has a least element and admits a rank function.

\subsection{Quasi-hereditary algebras}
Let $A$ be a finite dimensional associative algebra over a field $k$.

\subsubsection{}Let $\Lambda$ denote the set of isomorphism classes of simple left modules of $A$ and denote by $L(\lambda)$ a representative of $\lambda\in\Lambda$. The category of finite dimensional left $A$-modules is denoted by $A$-mod. We denote the projective cover and injective hull of $L(\lambda)$ in $A$-mod by $P(\lambda)$ and $I(\lambda)$. We use the numbers 
$$d_\lambda\,:=\,\dim_k\End_A(L(\lambda))\in\mZ_{>0},\quad\mbox{for all $\lambda\in\Lambda$.}$$

 \subsubsection{}When $A$ is considered with a partial order $\le$ on $\Lambda$, we write $(A,\le)$. For such a partial order, following \cite{DR}, for each $\lambda\in\Lambda$ the {\bf standard module} $\Delta(\lambda)$ is defined as the maximal quotient of $P(\lambda)$ such that all simple constituents are of the form $L(\mu)$ with $\mu\le\lambda$. Dually, the costandard module $\nabla(\lambda)$ is the maximal submodule of $I(\lambda)$ with the same condition on simple constituents. 
 
 We say that $(A,\le)$ is quasi-hereditary if its module category is a highest weight category for $\le$. Concretely, this means the following.
\begin{definition}[\cite{CPS}]\label{defqh}
The algebra $(A,\le)$ is quasi-hereditary if for all $\lambda\in\Lambda$:
\begin{enumerate}
\item We have $[\Delta(\lambda):L(\lambda)]=1$.
\item The module $P(\lambda)$ has a filtration with each quotient isomorphic to a standard module $\Delta(\mu)$ with $\lambda\le\mu$.
\end{enumerate}
\end{definition}
If $(A,\le)$ is quasi-hereditary, then \cite[Lemma~2.5]{DR} implies we have the equality
$$(P(\lambda):\Delta(\mu))d_\mu\;=\;[\nabla(\mu):L(\lambda)]d_\lambda,\quad\mbox{for all $\lambda,\mu\in\Lambda$}.$$

\subsubsection{}
If $(A,\le)$ is quasi-hereditary, then so is $(A,\le')$ for every extension $\le'$ of $\le$. Moreover, the (co)standard modules for the two partial orders are identical. This motivates the following definition.
\begin{definition}\label{defess}
Let $(A,\le)$ be quasi-hereditary. We define the essential partial order $\le^e$ of $\le$ on $\Lambda$ as the partial order transitively generated by the following two relations. For $\lambda,\mu\in\Lambda$, we have $\mu\le^e\lambda$ when $[\Delta(\lambda):L(\mu)]\not=0$ or $(P(\mu):\Delta(\lambda))\not=0$.
\end{definition}
Clearly $\le$ is an extension of $\le^e$. We say that two quasi-hereditary structures $(A,\le_1)$ and $(A,\le_2)$ are equivalent if they generate the same essential partial order. This is the same as demanding their standard modules be
 identical.
 
 \begin{example}
 If $A$ is semisimple, $(A,\le)$ is quasi-hereditary for every partial order $\le$ on $\Lambda$. We then always find $\mu\le^e\lambda$ if and only if $\mu=\lambda$. In particular, all quasi-hereditary structures are equivalent.
 \end{example}

\subsubsection{} Now assume that $A$-mod has an involutive contravariant autoequivalence $\dd$ inducing the identity on $\Lambda$. We will simply say that `$A$ has a
simple preserving
duality $\dd$'. It follows that $\dd P(\lambda)\simeq I(\lambda)$ and, for every partial order, $\dd\Delta(\lambda)\simeq \nabla(\lambda)$. In particular, if $(A,\le)$ is quasi-hereditary we find
\begin{equation}
\label{BGG}
(P(\lambda):\Delta(\mu))d_\mu\;=\;[\Delta(\mu):L(\lambda)]d_\lambda,\quad\mbox{for all $\lambda,\mu\in\Lambda$}.
\end{equation}
Consequently, the essential order of $\le$ of Definition~\ref{defess} is in this case generated by the relation $\mu\le^e\lambda$ when $[\Delta(\lambda):L(\mu)]\not=0$.
Equation~\eqref{BGG} also implies that
\begin{equation}
\label{LL}
[P(\lambda):L(\lambda)]\;=\;\sum_{\mu}\frac{d_\lambda}{d_\mu}[\Delta(\mu):L(\lambda)]^2\;=\;\sum_{\mu}\frac{d_\mu}{d_\lambda}(P(\lambda):\Delta(\mu))^2.
\end{equation}

\subsection{Coxeter groups}

\subsubsection{}A Coxeter matrix consists of a countable set $S$ and a symmetric function $m:S\times S\to\mZ_{>0}^\infty$ with $m(s,s')=1$ if and only if $s=s'$. To a Coxeter matrix we associate the Coxeter group $W$ with generating set $S$ and relations $(ss')^{m(s,s')}=e$, for all $s,s'\in S$. Whenever we mention a Coxeter group $W$, it is considered as a group together with its defining set of generators $S$, {\it i.e.} as a Coxeter system $(W,S)$. We will freely use the standard results on Coxeter groups from \cite[Section~1]{BB}.   

We have the length function $\ell:W\to\mN$ of \cite[Section~1.4]{BB} which satisfies in particular $\ell(e)=0$ and $\ell(s)=1$ for $s\in S$.

Following \cite[Section~1.1]{BB}, we can graphically represent a Coxeter matrix as a Coxeter graph. In Appendix~\ref{App} we list those graphs for all finite Weyl groups. Of two vertices $s,t$ in the Coxeter graph ({\it i.e.} two elements in $S$) we say that they are neighbours if there is an edge between them, which is when $m(s,t)>2$, which is in turn equivalent with $st\not=ts$ in $W$. 

\subsubsection{Coxeter pairs and bw-Coxeter graphs}  Following \cite[Section~2.4]{BB}, for a subset $J\subset S$ we let $W_J$ be the `parabolic subgroup' of $W$ generated by the set $J$. By a {\bf Coxeter pair} we will mean a pair $(W,W_J)$ of a Coxeter group $W$ (viewed as a Coxeter system) and a parabolic subgroup $W_J<W$. Two Coxeter pairs $(W,W_J)$ and $(U,U_K)$ are isomorphic if there exists an isomorphism $\phi:W\stackrel{\sim}{\to}U$ of Coxeter systems with $\phi(W_J)=U_K$. For example, all the Coxeter pairs of the form $(A_n,A_{n-1})$ are isomorphic, whereas those of the form $(A_n,A_{n-2})$ yield two isomorphism classes.
For a Coxeter pair $(W,W_J)$, we have the set $W^J\subset W$ of shortest (with respect to $\ell$) representatives in $W$ of the parabolic quotient $W/W_J$. 
For $K\subset J\subset S$, we simplify the notation $(W_J)^K$ to $W_{J}^K$.

The information of a Coxeter pair $(W,W_J)$ can be graphically represented by the Coxeter graph of $(W,S)$, where the vertices in $J$ are white and the remaining ones black. We call such a graph a {\bf bw-Coxeter graph}. Examples are given in \ref{BU} and the proof of Theorem~\ref{Thm3} below.

\subsubsection{Bruhat orders} The Bruhat order $\le_B$ on $W$ is given in \cite[Definition~2.1.1]{BB}. We will usually employ the equivalent definition in \cite[Corollary~2.2.3]{BB}. By \cite[Theorem~2.5.5]{BB}, the poset $(W^J,\le_B)$ is pointed, for any $J\subset S$. Namely, it is graded with rank function $\ell$ and has least element $e$. If $|W|<\infty$, the length of such a poset is given by
$$\Le(W^J,\le_B)\;=\;\ell(w_0w_0^J)\;=\;\ell(w_0)-\ell(w_0^J),$$
with $w_0$ and $w_0^J$ the longest elements of $W$ and $W_J$.

\subsubsection{Coxeter factors}\label{factors} A Coxeter graph naturally decomposes into connected components. A connected graph is known as irreducible and we will likewise refer to Coxeter groups with connected graph as `irreducible Coxeter groups'. For each Coxeter system $(W,S)$ where the graph has finitely many connected components, say $S=\sqcup_iS_i$, we have a canonical  group isomorphism
$W\simeq\prod_iW_i$,
with $W_i:=W_{S_i}$. For $J\subset S$ we write $J_i=J\cap S_i$. When $J_i=S_i$, the factor $W_i$ plays no role in $W^J$. It is therefore convenient to leave out those terms in the factorisation and write
$$W\;\stackrel{J}{=}\;\prod_{i=1}^n W_i,\quad\mbox{if we have}\quad W\,=\, \prod_{i=1}^mW_i\;\mbox{ with $W_i\subset W_J$ if and only if $n<i\le m$.}$$

\subsection{Category $\cO$}\label{CatO}
In this section, we work over the field $\mC$ of complex numbers. We recall some facts about the BGG category $\cO$ from \cite{BGG}. Since we will only need very specific facts later on, we do not give any details.
\subsubsection{}\label{DefO}
For a reductive Lie algebra $\fg$ with Cartan and Borel subalgebra $\fh\subset\fb\subset\fg$, category $\cO=\cO(\fg,\fb,\fh)$ is the full subcategory of the category of finitely generated $\fg$-modules containing the modules which are locally $U(\fb)$-finite and semisimple as $\fh$-modules. The simple modules in $\cO$ are labelled by $\fh^\ast$. The block decomposition of $\cO$ is given in \cite[Section~1.13]{Hu}. For an integral weight $\lambda\in\fh^\ast$, see \cite[Section 0.6]{Hu}, the (isoclasses of) simple modules in the block containing $\lambda$ are labelled by the orbit $W\cdot\lambda$ for $W=W(\fg,\fh)$ the Weyl group of $\fg$. This Weyl group is a Coxeter group with generators given by reflections with respect to simple roots of $\fb$. Now take $\lambda$ to be anti-dominant, see \cite[Section~3.5]{Hu}, and $W_J<W$ the stabiliser of $\lambda$. The block containing $\lambda$ is denoted by $\cO_\lambda$. We thus find a  bijection between $W^J$ and the set of simple modules in $\cO_\lambda$, given by $w\mapsto w\cdot\lambda$. Non-integral blocks can be described similarly.

It is shown in \cite[Theorem~11]{So} that, up to equivalence of categories, $\cO$ only depends on (the isomorphism class of) the Coxeter pair $(W,W_J)$. We will therefore write $\cO(W,W_J)$ instead of $\cO_\lambda(\fg,\fb,\fh)$.

\subsubsection{}\label{ResSoergel}By \cite[Proposition~3.13]{Hu}, a block in $\cO$ is equivalent to $A$-mod for a finite dimensional algebra, and by \cite[Theorem~3.2]{Hu} it has a simple preserving duality.
 By \cite[Theorem~3.10]{Hu}, the algebra $A$ is quasi-hereditary for the partial order $\le$ as defined in \cite[Section~0.6]{Hu}. Moreover, it is shown in \cite[Section~5.1]{Hu} that the essential order $\le^e$ is the Bruhat order $\le_B$ on $W^J$. Actually, in this special case, the partial order $\le^e$ only contains the generating relation(s) in Definition~\ref{defess}.

\section{Quasi-hereditary algebras with simple preserving duality}\label{SecQH}

Consider a finite dimensional algebra $A$ over a field $k$.

\subsection{Uniqueness of highest weight structure}\label{SecUnique}
The main result of this section states that when $A$ has a simple preserving duality, it admits at most one quasi-hereditary structure, up to equivalence.
\begin{theorem}\label{ThmQH}
Assume $A$ has a simple preserving duality. If for two partial orders $\le_1$ and $\le_2$, both $(A,\le_1)$ and $(A,\le_2)$ are quasi-hereditary, then $\le_1^e=\le_2^e$. Equivalently, the standard modules for both partial orders coincide.
\end{theorem}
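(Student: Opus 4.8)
The plan is to argue by induction on $|\Lambda|$, proving the equivalent statement that the standard modules $\Delta_1(\lambda)$ and $\Delta_2(\lambda)$ for the two orders coincide for every $\lambda$. The engine of the induction is to isolate, in a manner independent of the chosen order, the maximal weights of the highest weight structure, to show that the corresponding projectives are already standard, and then to strip these off via a heredity ideal and recurse on the quotient algebra, which will again carry a simple preserving duality.

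First I would extract the order-independent invariant. The Cartan numbers $[P(\lambda):L(\lambda)]$ depend only on $A$, not on any partial order. By the duality version of BGG reciprocity, equation \eqref{LL} gives $[P(\lambda):L(\lambda)] = \sum_\mu \frac{d_\lambda}{d_\mu}[\Delta(\mu):L(\lambda)]^2$, in which the term $\mu=\lambda$ equals $1$ (using $[\Delta(\lambda):L(\lambda)]=1$) and all other terms are nonnegative. Hence $[P(\lambda):L(\lambda)]=1$ if and only if $[\Delta(\mu):L(\lambda)]=0$ for all $\mu\ne\lambda$, which, by the duality-simplified description of $\le^e$, is exactly the condition that $\lambda$ be maximal for $\le^e$. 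Therefore the set $M := \{\lambda\in\Lambda : [P(\lambda):L(\lambda)]=1\}$ is \emph{the same} whether computed from $\le_1$ or $\le_2$, and for each $i$ it is precisely the set of $\le_i^e$-maximal elements. For $\lambda\in M$, reciprocity \eqref{BGG} gives $(P(\lambda):\Delta_i(\mu)) = \frac{d_\lambda}{d_\mu}[\Delta_i(\mu):L(\lambda)] = \delta_{\lambda\mu}$, so the $\Delta$-filtration of $P(\lambda)$ has the single section $\Delta_i(\lambda)$ and thus $\Delta_i(\lambda)=P(\lambda)$. As $P(\lambda)$ is intrinsic to $A$, this already yields $\Delta_1(\lambda)=\Delta_2(\lambda)$ for all $\lambda\in M$.

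For the inductive step I would first replace each $\le_i$ by its essential order $\le_i^e$; this does not change the standard modules, and one checks directly from Definition~\ref{defess} that $(A,\le_i^e)$ is again quasi-hereditary (every constituent of a standard module, and every standard section of an indecomposable projective, satisfies the inequality required for $\le^e$ by construction). Since $\Lambda$ is finite, $M\ne\emptyset$; fix $\lambda_0\in M$. Now $\lambda_0$ is maximal for each $\le_i^e$ and $\Delta_i(\lambda_0)=P(\lambda_0)$, so $Ae_{\lambda_0}A$ is a heredity ideal and $\bar A := A/Ae_{\lambda_0}A$ is quasi-hereditary for both restricted orders, with unchanged standard modules $\Delta_i(\mu)$ for $\mu\ne\lambda_0$ (here $L(\lambda_0)$ does not occur in $\Delta_i(\mu)$ because $\lambda_0$ is maximal). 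The subcategory $\bar A\text{-mod}\subset A\text{-mod}$ consists of the modules without $L(\lambda_0)$ as a composition factor; as $\dd$ preserves composition factors and fixes $\lambda_0$, it restricts to a simple preserving duality on $\bar A$. Since $|\Lambda\setminus\{\lambda_0\}|<|\Lambda|$, the induction hypothesis applies to $\bar A$ and gives $\Delta_1(\mu)=\Delta_2(\mu)$ for all $\mu\ne\lambda_0$. Combined with the case $\lambda_0\in M$, this proves $\Delta_1=\Delta_2$ on all of $\Lambda$, whence $\le_1^e=\le_2^e$.

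The main obstacle, and the only place where the hypothesis is genuinely used, is the order-independent identification of the top of the structure, that is, the equivalence between $[P(\lambda):L(\lambda)]=1$ and $\lambda$ being $\le^e$-maximal. This rests on the \emph{symmetric} form \eqref{BGG} of BGG reciprocity, which is itself a consequence of the simple preserving duality via $\dd\Delta(\lambda)\simeq\nabla(\lambda)$; without duality the two generating relations of $\le^e$ in Definition~\ref{defess} are genuinely distinct and the argument collapses, as one must expect since uniqueness then fails. The remaining points, namely that $(A,\le^e)$ is quasi-hereditary, that deleting a maximal weight yields a heredity ideal, and that the quotient inherits the duality, are routine but must be verified to keep the recursion well founded.
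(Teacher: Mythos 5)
Your proposal is correct, and its engine is the same as the paper's: the duality-symmetric reciprocity \eqref{BGG} and its consequence \eqref{LL} force the Cartan number $[P(\lambda):L(\lambda)]$ to detect maximality, since the $\mu=\lambda$ term in \eqref{LL} contributes exactly $1$. The paper uses this computation only as the base case of its induction, whereas you promote it to the order-independent characterisation $M=\{\lambda\,:\,[P(\lambda):L(\lambda)]=1\}$ of the set of $\le_i^e$-maximal weights, which is a nice intrinsic statement (the top of any quasi-hereditary structure on such an $A$ is determined by the Cartan matrix alone). After that, the two arguments diverge. The paper never leaves the Grothendieck group: it runs a downward induction along $\le_1$ itself, using \eqref{LL} to show $[\Delta_2(\mu):L(\lambda)]=0$ unless $\lambda\le_1\mu$, and then writing both $[\Delta_1(\lambda)]$ and $[\Delta_2(\lambda)]$ as the same expression $[P(\lambda)]-\sum_{\lambda<_1\nu}\tfrac{d_\lambda n_\nu}{d_\nu}[\Delta_\bullet(\nu)]$; nothing beyond \eqref{BGG} and \eqref{LL} is needed. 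You instead induct on $|\Lambda|$ by stripping a weight $\lambda_0\in M$ via the heredity ideal $Ae_{\lambda_0}A$ and recursing on $\bar A$. This is valid: the facts you defer (that $(A,\le^e)$ is again quasi-hereditary with the same standard modules, that a maximal weight with $\Delta(\lambda_0)=P(\lambda_0)$ and $[P(\lambda_0):L(\lambda_0)]=1$ gives a heredity ideal whose quotient is quasi-hereditary with unchanged standards, and that $\dd$ restricts to $\bar A$-mod $=\{N\,:\,[N:L(\lambda_0)]=0\}$) are standard, essentially from \cite{DR, CPS}. The trade-off is that you import this structure theory where the paper's argument is self-contained bookkeeping in $K_0(A)$, and, more substantively, your induction on the number of simples does not survive the generalisation the paper cares about in \ref{KM}: for upper finite highest weight categories $\Lambda$ is infinite, so induction on $|\Lambda|$ is not well founded, while the paper's induction along $\le_1$ (each weight having only finitely many weights above it) extends verbatim. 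So your route is a correct and somewhat more structural variant for finite $\Lambda$, but the paper's is both more elementary and more robust.
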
 
\begin{proof}
We denote the standard modules with respect to the partial orders $\le_1$ and $\le_2$ by $\Delta_1(\lambda)$ and $\Delta_2(\lambda)$. We will start by proving that $[\Delta_1(\lambda)]=[\Delta_2(\lambda)]$, in the Grothendieck group $K_0(A)$, by induction on $\lambda$ along $\le_1$. 

First assume that $\lambda$ is maximal in $\le_1$. By Definition~\ref{defqh}(2), we find $P(\lambda)=\Delta_1(\lambda)$. By Definition~\ref{defqh}(1), this means in particular that $[P(\lambda):L(\lambda)]=1$. Hence, \eqref{LL} applied to $(A,\le_2)$ implies $P(\lambda)=\Delta_2(\lambda)$. In particular, we find $\Delta_1(\lambda)=\Delta_2(\lambda)$.

Now we fix $\lambda\in\Lambda$ such that for all $\lambda<_1\nu$ we already know that $[\Delta_1(\nu)]=[\Delta_2(\nu)]$. In particular, we define
$$n_\nu:=[\Delta_1(\nu):L(\lambda)]=[\Delta_2(\nu):L(\lambda)]\qquad\mbox{for $\lambda<_1\nu$.}$$
By \eqref{LL} we have
$$1+\sum_{\lambda<_1\nu}\frac{d_\lambda n_\nu^2}{d_\nu}\;=\; [P(\lambda):L(\lambda)]\;=\;1+\sum_{\lambda<_1\nu}\frac{d_\lambda n_\nu^2}{d_\nu}+\sum_{\lambda\not\le_1\mu} \frac{d_\lambda}{d_\mu}[\Delta_2(\mu):L(\lambda)]^2.$$
We thus find $[\Delta_2(\mu):L(\lambda)]=0$ unless $\lambda\le_1\mu$.
 By \eqref{BGG} we find that in $K_0(A)$ we have
$$[\Delta_1(\lambda)]\;=\;[P(\lambda)]-\sum_{\lambda<_1\nu}\frac{d_\lambda n_\nu}{d_\nu}[\Delta_1(\nu)]\;=\;[\Delta_2(\lambda)].$$
This concludes the proof that $[\Delta_1(\lambda)]=[\Delta_2(\lambda)]$ for all $\lambda\in\Lambda$. By definition of the essential order, this shows $\le_1^e=\le_2^e$.
\end{proof}

\subsection{Examples and applications}

\subsubsection{Cellular algebras} Assume that $A$ has a cell datum as in \cite[Definition~1.1]{GL}, {\it i.e.} $A$ is cellular. Part of the cell datum is an anti-involution $\star$ of the $k$-algebra $A$, which thus defines a duality on $A$-mod. By \cite[Definition~1.1(C2)]{GL} and \cite[Theorem~3.4]{GL}, this duality is simple preserving. 

In \cite[Theorem~1.1]{KX} it is proved that the cell datum on $A$ induces a quasi-hereditary structure on $A$ if and only if the global dimension of $A$ is finite. This shows in particular that if the cell datum does not lead to a quasi-hereditary structure, $A$ does not admit any quasi-hereditary partial order. Together with Theorem~\ref{ThmQH}, this thus implies that the only quasi-hereditary structure a cellular algebra can admit comes from its cell datum.

This observation is for instance applicable to (walled) Brauer algebras, Temperley-Lieb algebras and partition algebras, see~\cite{GL, KX}.

\subsubsection{Category $\cO$}\label{CorO}  
Recall the conclusions from Section~\ref{CatO}.
Assume that we have a $\mC$-linear equivalence
$$\cO(W,W_J)\;\stackrel{\sim}{\to}\; \cO(U,U_K)$$
for finite Weyl groups $W, U$ and parabolic subgroups $W_J<W$ and $U_{K}<U$. By Theorem~\ref{ThmQH}, this must be an equivalence of highest weight categories. In particular, considering the equivalence on simple objects must induce an order isomorphism
$$(W^J,\le_B)\;\stackrel{\sim}{\to}\; (U^K,\le_B).$$

\section{Recovering Coxeter pairs from Bruhat orders}\label{SecRecover}
Motivated by the observations in \ref{CorO} we investigate to which extend a Coxeter pair $(W,W_J)$ can be recovered from the poset $(W^J,\le_B)$.

\subsection{Coxeter systems} It is easy to see that a Coxeter system $(W,S)$ can be recovered from its Bruhat order. In this subsection we prove a slightly more general statement, namely that the poset $(W,\le_B)$ is isomorphic to some Bruhat order $(U^K,\le_B)$ if and only if $U$ and $W\times U_K$ are isomorphic as Coxeter groups.

\subsubsection{} Consider an arbitrary pointed poset $(X,\le)$. For a subset $I\subset X_1=\rho^{-1}(1)$, we define 
$$X_2(I)\;=\;\{x\in X_2\,|\, y\lhd x \mbox{ if and only if }y\in I\}.$$
For $I=\{a_1,\ldots,a_n\}$ we simplify notation as $X_2(\{a_1,\ldots,a_n\})$ to $X_2(a_1,\ldots,a_n)$. We also set
$$X^0(I)\;=\;X_0\sqcup I \sqcup\{x\in X_{>1}\,|\, y\le x \mbox{ with }y\in X_2\mbox{ implies }y\in X_2(I)\}$$
and
$$X^\infty(I)\;=\;\{x\in X\,|\, y\le x \mbox{ with }y\in X_1\mbox{ implies }y\in I\}.$$
The subsets $X^0(I)\subset X^\infty(I)$ of $X$ inherit the structure of a pointed poset. Note that for $a\in X_1$, we have $X^\infty(a)=X^0(a)$. In general, we have $X^\infty(I)\cap X_1=I$ and $X^0(I)\cap X_2=X_2(I)$.

\begin{definition}\label{DefTrip}
To a pointed poset $(X,\le)$ we associate the triple $(X_1, \mu,\nu)$ comprising the set $X_1$ with two functions
$$\mu:X_1\times X_1\to\mZ_{>0}^\infty,\;(a,b)\mapsto \Le(X^0(a,b))\quad\mbox{and}\quad \nu:X_1\to\mN^\infty,\; a\mapsto |X_2(a)|.
$$

\end{definition}

\begin{theorem}\label{Thm1}
For a Coxeter system $(W,S)$ with matrix $m:S\times S\to\mZ_{>0}^\infty$ and a subset $J\subset S$, consider the pointed poset $(X,\le,\rho):=(W^J,\le_B,\ell)$. The triple $(X_1,\mu,\nu)$ of Definition~\ref{DefTrip} is determined by the following properties:
\begin{enumerate}
\item The subset $X_1\subset W^J$ is precisely $S\backslash J$.
\item For distinct $s_1,s_2\in S\backslash J$, we have $m(s_1,s_2)=\mu(s_1,s_2)$.
\item For $s\in S\backslash J$, the number of neighbours of $s$ in $J$ is given by $\nu(s)$.
\end{enumerate}
\end{theorem}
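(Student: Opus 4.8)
The plan is to read off all three invariants directly from the combinatorics of $W^J$, using the descent characterisation $W^J=\{w\in W\mid D_R(w)\cap J=\emptyset\}$, where $D_R(w)=\{s\in S\mid \ell(ws)<\ell(w)\}$ is the right descent set, together with the support $\mathrm{supp}(w)\subseteq S$ (the set of generators in a reduced word). I will freely use three standard facts from \cite[Section~1--2]{BB}: for $s\in S$, $s\le_B w$ iff $s\in\mathrm{supp}(w)$; more generally $y\le_B w$ implies $\mathrm{supp}(y)\subseteq\mathrm{supp}(w)$; and $\le_B$ on $W^J$ is the restriction of $\le_B$ on $W$. Property (1) is then immediate: an element of length $1$ is some $s\in S$, and $s\in W^J$ iff $s\notin J$, so $X_1=S\backslash J$. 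For (2) and (3) the key observation is that a rank-$2$ element $y\in W^J$ has $\mathrm{supp}(y)=\{a,b\}$ for two distinct generators, and its lower covers in $(W^J,\le_B)$ are exactly $\mathrm{supp}(y)\cap(S\backslash J)$; a short check on $D_R$ classifies which words $ab$ lie in $W^J$ (if $a,b$ are non-neighbours then $ab\in W^J$ iff $a,b\notin J$, while if they are neighbours then $ab\in W^J$ iff the last letter $b\notin J$).

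For (3), the set $X_2(s)$ consists of the rank-$2$ elements with exactly one black vertex, namely $s$, in their support, so $\mathrm{supp}(y)=\{s,w\}$ with $w\in J$. The classification above shows the only such element lying in $W^J$ is $y=ws$, and this happens precisely when $m(s,w)\ge 3$, that is, when $w$ is a neighbour of $s$ in $J$. Distinct neighbours give distinct elements of $X_2(s)$, so $\nu(s)=|X_2(s)|$ equals the number of neighbours of $s$ in $J$.

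For (2) I would show $X^0(s_1,s_2)=W_{\{s_1,s_2\}}$ for distinct $s_1,s_2\in S\backslash J$; since the longest element of the dihedral group $W_{\{s_1,s_2\}}$ has length $m(s_1,s_2)$ (read as $\infty$ when the group is infinite), this yields $\mu(s_1,s_2)=\Le(W_{\{s_1,s_2\}},\le_B)=m(s_1,s_2)$. First, $W_{\{s_1,s_2\}}\subseteq W^J$, because right descents of elements of a parabolic subgroup lie inside that subgroup and hence avoid $J$; one then checks that the rank-$2$ elements of $W_{\{s_1,s_2\}}$ are exactly $X_2(s_1,s_2)$, and that $\{w\in W^J\mid\mathrm{supp}(w)\subseteq\{s_1,s_2\}\}=W_{\{s_1,s_2\}}$, being downward closed under $\le_B$, is contained in $X^0(s_1,s_2)$. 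The reverse inclusion is the crux and amounts to the following statement. \textbf{Key Lemma.} If $x\in W^J$ and every rank-$2$ element $y\in W^J$ with $y\le_B x$ has $\mathrm{supp}(y)\subseteq\{s_1,s_2\}$, then $\mathrm{supp}(x)\subseteq\{s_1,s_2\}$.

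I would prove the Key Lemma by induction on $\ell(x)$. The base case $\ell(x)=2$ is the hypothesis applied to $y=x$. For the inductive step, choose a left descent $s$ of $x$; then $x=sv$ with $v=sx$, and $v\in W^J$ since $D_R(v)\subseteq D_R(x)$ for a reduced factorisation. As $v\le_B x$, induction gives $\mathrm{supp}(v)\subseteq\{s_1,s_2\}$, so it remains to show $s\in\{s_1,s_2\}$. The hard part will be the case where $s$ is a white generator, which need not be visible among length-$2$ subwords: here I would argue that if $s$ commuted with both $s_1$ and $s_2$ it would commute with all of $W_{\{s_1,s_2\}}\ni v$, forcing $xs=v$ and hence $s\in D_R(x)\cap J$, contradicting $x\in W^J$; otherwise $s$ neighbours some $s_j\in\mathrm{supp}(v)$, and then $ss_j$ is a rank-$2$ element of $W^J$ (its last letter $s_j\notin J$) with $ss_j\le_B x$ as a subword of $sv$, whose support contains $s$, contradicting the hypothesis. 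The case $s\in S\backslash J$ is easier, since $ss_j\in W^J$ for any $s_j\in\mathrm{supp}(v)$. This forces $s\in\{s_1,s_2\}$ and closes the induction. The main obstacle throughout is exactly this lemma: parabolic (white) generators in $\mathrm{supp}(x)$ may fail to produce a rank-$2$ element of $W^J$ by themselves, so the support cannot simply be read off from length-$2$ subwords, and it is the interplay of left-peeling with the right-descent constraint $x\in W^J$ that makes the argument work.
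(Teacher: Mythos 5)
Your proof is correct and takes essentially the same route as the paper: in both, the crux is the identity $X^0(s_1,s_2)=W_{\{s_1,s_2\}}$, established by showing that any generator outside $\{s_1,s_2\}$ occurring in a reduced word of $x\in X^0(s_1,s_2)$ produces a rank-two element of $W^J$ below $x$ whose support is not $\{s_1,s_2\}$, with the same case split (offending letter in $J$, where commuting with both $s_1,s_2$ would create a right descent in $J$, versus offending letter in $S\backslash J$). The only differences are organisational: you peel left descents by induction on $\ell(x)$ where the paper argues directly with the right-most offending letter, and you spell out parts (1) and (3), which the paper records as obvious.
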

\begin{proof}
Parts (1) and (3) are obvious. Now we prove part (2). For $\{s_1,s_2\}\subset S\backslash J=X_1$ it is clear that $X_2({s_1,s_2})$ contains only $s_1s_2$ and $s_2s_2$, where the two elements might be identical. We will prove in the next paragraph that $X^0(s_1,s_2)= W_{\{s_1,s_2\}}.$ The length of the Bruhat order on $W_{\{s_1,s_2\}}$ is clearly $m(s_1,s_2)$.

That $W_{\{s_1,s_2\}}\subset X^0(s_1,s_2)$ is obvious. Now take $x\in X^0(s_1,s_2)$ and assume that there exists $t\in S\backslash\{s_1,s_2\}$ which appears in a reduced expression of $x$. Without loss of generality, we assume that $t$ is the right-most such reflection in the relevant reduced expression.
If $t\in J$ then $t$ cannot appear on the right in $x\in W^J$, in particular it cannot commute with both $s_1$ and $s_2$. But this means that $ts_i\in X_2$ and $ts_i\le x$ for at least on $i$, a contradiction by the definition of $X^0(s_1,s_2)$. Hence $t\not\in J$. But this means that $s_it$ and $ts_i$ are in $X_2$, for both $i$. At least one of them is under $x$ in the Bruhat order and again we find a contradiction. This proves part (2).
\end{proof}

\begin{corollary}
Assume that two Coxeter groups $W,U$ with a parabolic subgroup $U_K<U$ yield an order isomorphism $(W,\le_B)\simeq(U^K,\le_B)$. Then we have
an inclusion of Coxeter groups $W\hookrightarrow U$, which together with the canonical inclusion $U_K\hookrightarrow U$ induces an isomorphism 
$W\times U_K\stackrel{\sim}{\to} U. $
\end{corollary}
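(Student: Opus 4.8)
The plan is to apply Theorem~\ref{Thm1} to both posets and then read off that the Coxeter graph of $U$ splits $U_K$ off as a direct factor. First I would fix Coxeter systems $(W,S)$ and $(U,T)$ with $K\subset T$, and observe that $(W,\le_B)=(W^\emptyset,\le_B)$, so that Theorem~\ref{Thm1} applies to $W$ with the empty parabolic subset. Since the triple $(X_1,\mu,\nu)$ of Definition~\ref{DefTrip} is built purely from the pointed poset structure, the given order isomorphism $(W^\emptyset,\le_B)\simeq(U^K,\le_B)$ transports it: it restricts on rank-one elements to a bijection $\phi\colon S\to T\setminus K$ satisfying $m_W(s_1,s_2)=m_U(\phi s_1,\phi s_2)$ for distinct $s_1,s_2$ (by part (2)) and $\nu(s)=\nu(\phi s)$ for all $s$ (by part (3)).

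The key point is that, because $W$ carries the empty parabolic subset, part (3) of Theorem~\ref{Thm1} forces $\nu(s)=0$ for every $s\in S$: there are no vertices to serve as neighbours in $\emptyset$. Transporting along $\phi$ then gives $\nu(t)=0$ for every $t\in T\setminus K$, which says that no generator outside $K$ has a neighbour inside $K$. Equivalently, the Coxeter graph of $U$ carries no edge between $T\setminus K$ and $K$, so the partition $T=(T\setminus K)\sqcup K$ is a union of connected components. By the factorisation of Coxeter graphs recalled in \ref{factors}, this yields a direct product decomposition $U\simeq U_{T\setminus K}\times U_K$.

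It remains to identify the factor $U_{T\setminus K}$ with $W$. Part (2) of Theorem~\ref{Thm1} shows that $\phi$ matches the off-diagonal Coxeter matrix entries of $W$ with those of $U_{T\setminus K}$, while the diagonal entries equal $1$ by definition; since a Coxeter system is determined up to isomorphism by its Coxeter matrix, $\phi$ extends to an isomorphism $W\stackrel{\sim}{\to}U_{T\setminus K}$ of Coxeter systems. Composing with $U_{T\setminus K}\hookrightarrow U$ gives the asserted inclusion $W\hookrightarrow U$, and together with $U_K\hookrightarrow U$ it realises the decomposition of the previous paragraph as the required isomorphism $W\times U_K\stackrel{\sim}{\to}U$. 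The only step needing real care is the very first one, namely verifying that $\mu$ and $\nu$ are genuinely poset invariants so that the order isomorphism both restricts to $\phi$ and preserves the labels; once that is in place the argument is a direct bookkeeping of Theorem~\ref{Thm1} and I expect no further obstacle.
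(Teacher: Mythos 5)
Your proposal is correct and follows essentially the same route as the paper's own proof: apply Theorem~\ref{Thm1} to both posets, use part (1) to get the bijection $S\to T\setminus K$, part (3) (with $\nu\equiv 0$ on the $W$-side since its parabolic subset is empty) to split off $U_K$ as a direct factor, and part (2) to identify the Coxeter matrices of $W$ and $U_{T\setminus K}$. Your write-up is in fact slightly more explicit than the paper's about why $\nu$ vanishes and why the invariants transport along the order isomorphism, but the argument is the same.
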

\begin{proof}Denote the Coxeter systems by $(W,S)$ and $(U,T)$.
By Theorem~\ref{Thm1}(1), an isomorphism $(W,\le_B)\stackrel{\sim}{\to}(U^K,\le_B)$ induces an injection $S\hookrightarrow T$ where the image $S'$ has complement $K$. By Theorem~\ref{Thm1}(3), $S'$ consists of vertices in the Coxeter graph which are not neighbouring to any vertex in $K$. In particular, $U\simeq U_{S'}\times U_K$ as Coxeter groups. Finally, Theorem~\ref{Thm1}(2) implies that the Coxeter matrices of $(W,S)$ and $(U_{S'},S')$ are identical, hence $W\simeq U_{S'}$ as Coxeter groups.
\end{proof}

\subsection{Irreducible factors}

In this subsection we reduce the problem of recovering Coxeter pairs from their Bruhat orders to the case of irreducible Coxeter groups.
\begin{theorem}\label{Thm2}
Consider two Coxeter systems $(W,S)$ and $(U,T)$ with $J\subset S$ and $K\subset T$. Assume the Coxeter graphs $S$ and $T$ have finitely many connected components. If the partial orders $(W^J,\le_B)$ and $(U^K,\le_B)$ are isomorphic, there exists $n\in\mN$ such that we can order the factorisations into irreducible Coxeter groups as
$$W\;\stackrel{J}{=}\;\prod_{i=1}^{n}W_i\qquad\mbox{and}\qquad U\;\stackrel{K}{=}\;\prod_{i=1}^{n}U_i$$
such that we have order isomorphisms $(W_i^{J_i},\le_B)\simeq (U_i^{K_i},\le_B)$ for all $1\le i\le n$.
\end{theorem}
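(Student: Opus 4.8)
The plan is to reduce the claim to a statement about direct products of posets. First I would record the product decomposition: writing $W=\prod_iW_i$ and $J_i=J\cap S_i$, the length function is additive over the factors, so $w=(w_i)_i$ lies in $W^J$ exactly when each $w_i\in W_i^{J_i}$, and since the subword characterisation of the Bruhat order in \cite[Corollary~2.2.3]{BB} is componentwise on a direct product, one gets an isomorphism of pointed posets $$(W^J,\le_B)\;\simeq\;\prod_i(W_i^{J_i},\le_B),$$ with rank $\ell$ additive. Discarding the factors with $J_i=S_i$, each contributing only the one-point poset, leaves precisely the factors recorded in $W\stackrel{J}{=}\prod_{i=1}^nW_i$. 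Setting $I_i=S_i\setminus J_i\subseteq X_1$ for the atoms coming from the $i$-th factor, the subset $X^\infty(I_i)$ introduced before Definition~\ref{DefTrip} consists of those elements all of whose atoms lie in $I_i$, which in the product are the elements supported in the $i$-th coordinate; hence $X^\infty(I_i)\simeq(W_i^{J_i},\le_B)$ and $X\simeq\prod_iX^\infty(I_i)$. Thus each irreducible factor is recovered from the poset once the partition $\{I_i\}$ of $X_1$ is known.

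The heart of the argument is to show that each factor $(W_i^{J_i},\le_B)$, with $W_i$ irreducible and $J_i\neq S_i$, is \emph{directly indecomposable} as a pointed poset. Suppose it split as $P\times Q$ with both factors having more than one element; then the atoms split as $I_i=A\sqcup B$ with $A,B\neq\emptyset$. For $a\in A$ and $b\in B$ the only rank-two elements above both are $ab$ and $ba$ (as in the proof of Theorem~\ref{Thm1}(2)), while in a product there is a unique rank-two element above two atoms from different factors; this forces $ab=ba$, i.e. $m(a,b)=2$, so no Coxeter edge joins $A$ to $B$ inside $S_i\setminus J_i$. Since $W_i$ is irreducible its Coxeter graph is connected, so $A$ and $B$ are linked by a path whose interior vertices all lie in $J_i$, spanning a parabolic subgroup of type $A$. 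For a length-one bridge $a-r-b$ with $r\in J_i$, the element $rab\in W^J$ has rank three and covers simultaneously $ra\in X_2(a)$, $rb\in X_2(b)$ and $ab$; but if $a\in P$ and $b\in Q$ then $rab$ dominates both $ra$ and $rb$, forcing $\rho(rab)\ge 2+2>3$, a contradiction.

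Finally I would globalise and transport. The decomposition of a connected poset into directly indecomposable direct factors is unique up to reordering, and our posets are connected (they have a least element) and $\mN$-graded, so the indecomposable factorisation of $X$ is exactly $\prod_iX^\infty(I_i)$. An order isomorphism $f\colon(W^J,\le_B)\stackrel{\sim}{\to}(U^K,\le_B)$ preserves atoms and this intrinsic factorisation, hence matches the factors bijectively: the number of factors agrees, giving the common $n$, and after reindexing the factorisation of $U$ by the induced permutation we obtain $(W_i^{J_i},\le_B)\simeq(U_i^{K_i},\le_B)$ for all $i$. The main obstacle is the indecomposability step: the witness $rab$ only handles a bridge of length one through $J_i$, and for longer $J$-bridges (for instance $W_i$ of type $A_m$ with $J_i$ a long interior interval) no rank-three element lies above both $X_2(a)$ and $X_2(b)$; one must instead analyse the type-$A$ parabolic quotient directly, most plausibly by induction on the bridge length, producing an element whose number of lower covers exceeds what any proper direct product of pointed posets can exhibit. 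A secondary point is to justify the uniqueness of the indecomposable factorisation in the possibly infinite $\mN$-graded setting, which can be done by induction along the rank function using the splitting of the atom set.
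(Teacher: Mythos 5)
Your strategy is genuinely different from the paper's: you factor the poset as $\prod_i(W_i^{J_i},\le_B)$, aim to prove each irreducible factor is directly indecomposable as a pointed poset, and then invoke uniqueness of decomposition into indecomposables. The paper instead defines an intrinsic equivalence relation $\sim$ on the atom set $X_1$ (built from the relation $\leadsto$ of \ref{leadsto}) and shows in Proposition~\ref{PropIrr} that its classes are exactly the sets $X_1\cap S_i$ and that $X^\infty(I)$ recovers $(W_{\hat{I}}^{\mathring{I}},\le_B)$; since $\sim$ is defined purely order-theoretically, any isomorphism of posets transports it, and no factorization theory is needed. Your framing could work, but as written it has a genuine gap, which you name yourself: the indecomposability argument is only carried out when two commuting atoms $a,b$ that a putative splitting $P\times Q$ would separate are joined through $J_i$ by a bridge with a \emph{single} interior vertex $r$, where you use the rank-three witness $rab$. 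For a longer bridge $a-r_1-\cdots-r_d-b$ (already unavoidable for $(A_4,\,J=\{s_2,s_3\})$ with $a=s_1$, $b=s_4$) no rank-three witness exists, and your proposed fallback --- an induction ``producing an element whose number of lower covers exceeds what any proper direct product can exhibit'' --- is not carried out, and lower covers are not the right invariant in any case. Since bridges of arbitrary length are precisely what distinguishes an irreducible pair from a reducible one, this is the heart of the theorem, not a technical remainder.

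The gap can be closed inside your framework by the paper's witness, which works for every bridge length $d\ge 1$: set $y=r_dr_{d-1}\cdots r_1a$. All reduced expressions of $y$ use only the letters $\{r_1,\dots,r_d,a\}$ and end in $a$, so $y\in W^J$ and $a$ is the unique atom below $y$, i.e.\ $y\in X^\infty(a)$. In a splitting with $a\in P$, $b\in Q$ this forces $y=(p,e)$, and then at most one cover of $y$ can dominate $b$, namely $(p,b)$. But $y_1=by$ and $y_2=yb$ are two \emph{distinct} elements of $W^J$ covering $y$ with $y_1\ge b\le y_2$ --- a contradiction (your $\mu(a,b)\ge 3$ argument already handles $d=0$). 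Separately, your final step relies on uniqueness of the decomposition of a pointed, $\mN$-graded, possibly infinite poset into directly indecomposable factors; this is true (it follows from Hashimoto's refinement theorem for connected posets, and a least element makes the poset connected), but it is a nontrivial result that you assert rather than prove, and the paper's route avoids needing it at all.
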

By definition in \ref{factors}, the irreducible factors of $W$ and $U$ which are ignored in the theorem all satisfy $|W_j^{J_j}|=1=|U_l^{K_l}|$ and hence trivially also lead to isomorphic posets.
Before proving the theorem, we need some preparatory results and notions.

\subsubsection{}\label{leadsto} Consider again an arbitrary pointed poset $(X,\le)$. We define a binary relation $\leadsto$ on $X_1$, where $a\leadsto b$ means that $\mu(a,b)=2$ and there exists $y\in X^0(a)$ and $\{y_1,y_2\}\subset X$ with $y_1\rhd y\lhd y_2$ and $y_1\ge b\le y_2$. 
We also introduce two equivalence relations $\leftrightsquigarrow$ and $\sim$ on $X_1$. The relation $\leftrightsquigarrow$ is the minimal equivalence relation with $a \leftrightsquigarrow b$ whenever $a\leadsto b$ or $b\leadsto a$.
The relation $a\sim b$ is the minimal equivalence relation with $a\sim b$ whenever $\mu(a,b)>2$ or $b\leftrightsquigarrow a$.

\begin{prop}\label{PropIrr}
For a Coxeter system $(W,S)$ and a subset $J\subset S$, consider the pointed poset $(X,\le,\rho):=(W^J,\le_B,\ell)$. 
\begin{enumerate}
\item For $a,b\in X_1=S\backslash J$ with $\mu(a,b)=2$, we have $a\leadsto b$ if and only if there exists a path in the Coxeter graph of $(W,S)$ which connects $a$ and $b$ via vertices exclusively in $J$. In particular, the binary relation $\leadsto$ is symmetric.
\item For $a,b\in X_1$, we have $a\sim b$ if and only if they belong to the same connected component of the Coxeter graph of $(W,S)$.
\item For an equivalence class $I\in X_1/\hspace{-1mm}\sim$, let $\hat{I}$ denote the connected component in the Coxeter graph of $S$ which contains $I$. Also set $\mathring{I}=\hat{I}\backslash I=J\cap \hat{I}$.
The poset $(X^\infty(I),\le)$ is isomorphic to $(W_{\hat{I}}^{\mathring{I}},\le_B)$.
\end{enumerate}
\end{prop}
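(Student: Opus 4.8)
The computational heart of all three statements is an explicit description of the auxiliary sets in terms of \emph{supports}. For $w\in W$ let $\mathrm{supp}(w)\subseteq S$ denote the set of simple reflections occurring in a reduced word for $w$; recall that $s\le_B w$ iff $s\in\mathrm{supp}(w)$ for $s\in S$, and that $u\le_B w$ forces $\mathrm{supp}(u)\subseteq\mathrm{supp}(w)$. My first step is to prove
\[
X^0(a)\;=\;\{w\in W^J\mid \mathrm{supp}(w)\subseteq J\cup\{a\}\}.
\]
For the inclusion ``$\supseteq$'' one notes that a rank two $z\le_B w$ has $\mathrm{supp}(z)\subseteq J\cup\{a\}$ with $a\in\mathrm{supp}(z)$ (as $z\notin W_J$), whence $\mathrm{supp}(z)=\{a,s\}$, $z=sa$ and $s\in J$ is a neighbour of $a$; thus $z\in X_2(a)$. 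For ``$\subseteq$'' one uses that $(W^J,\le_B)$ is graded by $\ell$: if some $b'\in\mathrm{supp}(w)$ lay in $S\setminus J$ with $b'\ne a$, then a saturated chain from $b'$ to $w$ would pass through a rank two element covering $b'$, which is not in $X_2(a)$. I also record the rigidity forced by $w\in W^J$: a connected component of the graph on $\mathrm{supp}(w)$ that is contained in $J$ would give $w$ a right descent in $J$. Hence every $w\in X^0(a)$ has connected support containing $a$, and writing $\hat a$ for the component of $a$ in the graph on $J\cup\{a\}$ and $\mathring a=\hat a\cap J$, we obtain $X^0(a)=W_{\hat a}^{\mathring a}$.

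Next I prove (1), keeping the standing hypothesis $\mu(a,b)=m(a,b)=2$. Because the poset is graded by $\ell$, each cover $y\lhd y_i$ in $W^J$ is also a cover in $W$, so $y_i$ is obtained by inserting one letter into a reduced word of $y$; since $b\notin\mathrm{supp}(y)\subseteq J\cup\{a\}$, the condition $b\le_B y_i$ forces that letter to be $b$. If $b$ commuted with all of $\mathrm{supp}(y)$ then $yb=by$ would be the \emph{only} such cover, so the existence of two covers $y_1\ne y_2$ above $b$ forces $b$ to be a neighbour of some $v\in\mathrm{supp}(y)$; as $m(a,b)=2$ excludes $v=a$, we get $v\in\mathring a$, and connectivity of $\mathrm{supp}(y)$ in $J\cup\{a\}$ produces a path $a-\cdots-v-b$ through $J$. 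For the converse I take a \emph{shortest} path $a-v_1-\cdots-v_{k-1}-b$ through $J$; being shortest it is induced, so $D:=\{a,v_1,\dots,v_{k-1},b\}$ spans a path-shaped Coxeter graph with $D\cap J=\{v_1,\dots,v_{k-1}\}$. Inside $W_D$ I take the Coxeter element $y=v_{k-1}\cdots v_1 a$, which is reduced with unique right descent $a\notin J$, so $y\in X^0(a)$, and I exhibit the two covers $y_1=by$ and $y_2=yb$; a direct check in $W_D$ shows these are distinct minimal representatives for $J\cap D$ (their right descent sets being $\{a\}$ and $\{a,b\}$), hence lie in $W^J$, cover $y$ there, and are $\ge_B b$. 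The resulting criterion is symmetric in $a,b$, proving symmetry of $\leadsto$.

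Parts (2) and (3) are then bookkeeping. For (2): each generating relation of $\sim$ --- either $m(a,b)>2$, an edge, or $a\leadsto b$, a $J$-path by (1) --- places $a,b$ in one component of the Coxeter graph, a property preserved under equivalence closure; conversely, given any graph path joining $a,b\in S\setminus J$ I list its vertices in $S\setminus J$ as $a=u_0,u_1,\dots,u_p=b$, observe that consecutive $u_i,u_{i+1}$ are either adjacent (so $u_i\sim u_{i+1}$ via the $m>2$ clause) or separated only by vertices of $J$ (so $u_i\leadsto u_{i+1}$ by (1)), and chain these. For (3): unwinding definitions gives $X^\infty(I)=\{w\in W^J\mid \mathrm{supp}(w)\cap(S\setminus J)\subseteq I\}$, and by (2) one has $I=\hat I\cap(S\setminus J)$, $\mathring I=\hat I\cap J$. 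The same ``no component inside $J$'' rigidity forces $\mathrm{supp}(w)\subseteq\hat I$ for $w\in X^\infty(I)$ (every component of $\mathrm{supp}(w)$ meets $S\setminus J$, hence meets $I\subseteq\hat I$), giving $X^\infty(I)\subseteq W_{\hat I}^{\mathring I}$; the reverse inclusion holds because an element supported in $\hat I$ with no descent in $\mathring I$ has no descent in $J$ at all. As both sides carry the order restricted from $(W,\le_B)$, they are isomorphic.

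I expect the only real difficulty to be the converse half of (1): manufacturing an element of $X^0(a)$ with two genuinely distinct covers lying above $b$ and verifying, in the path-shaped parabolic $W_D$, that $by$ and $yb$ are minimal coset representatives covering $y$. This is a concrete but label-sensitive computation with Coxeter elements; everything else rests on the support dictionary for $X^0$ and $X^\infty$ together with the gradedness of $(W^J,\le_B)$.
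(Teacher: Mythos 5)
Your proof is correct and follows essentially the same route as the paper's: the same two covers $by$ and $yb$ of a path-word $y\in X^0(a)$ for the ``if'' direction of (1), the same commutation argument showing $yb=by$ is the unique cover when $b$ commutes with $\mathrm{supp}(y)$, and the same support/connected-component rigidity for (3). The only difference is that you prove in detail the support dictionary $X^0(a)=W_{\hat a}^{\mathring a}$ and the descent computations that the paper states as evident (and your insistence on a \emph{shortest} path is harmless but unnecessary --- any path through $J$ with distinct vertices works, as in the paper).
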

\begin{proof}
For the proof of part (1), we consider arbitrary $a,b\in X_1$ with $\mu(a,b)=2$.
Assume first that there exists $n\in\mZ_{>0}$ and distinct $r_1,r_2,\ldots, r_n\in J$ such that each two consecutive elements in the sequence $a,r_1,\ldots,r_n, b$ are neighbours. Then we can define 
$$y=r_nr_{n-1}\cdots r_1a\;\in\, X^0(a), \quad y_1=br_nr_{n-1}\cdots r_1a\mbox{ and }y_2=r_nr_{n-1}\cdots r_1ab.$$
Clearly $y_1,y_2\in W^J$ satisfy the conditions of \ref{leadsto} to conclude $a\leadsto b$. This the ``if'' direction of part (1).

To prove the other direction of part (1), we observe that every reduced expression of an element $y\in X^0(a)$ is a word of elements in the connected component of the Coxeter graph of $J\sqcup \{a\}$ which contains $a$. By Theorem~\ref{Thm1}(2), $a$ and $b$ commute. If $b$ also commutes with all elements of $J$ used in the reduced expressions of $y$, then there can be only one $z\in X$ which satisfies $y\lhd z$ and $b\le z$, namely $z=yb=by$. Hence $a\leadsto b$ implies that we can form a path as desired, which concludes the proof of part (1).

Part (2) is an immediate application of part (1) and Theorem~\ref{Thm1}(2).

Now we prove part (3). Clearly we have $W_{\hat{I}}^{\mathring{I}}\subset X^\infty(I)$. Assume there exists $x\in X^\infty(I)$ with a reduced expression containing a simple reflection $t\in S$ not in the connected component $\hat{I}$. Without loss of generality, we assume that $t$ is the right-most element in a reduced expression and thus $t\not \in J$. Hence we have $t\in X_1\backslash I $ and $t\le x$, a contradiction.
\end{proof}

\begin{proof}[Proof of Theorem~\ref{Thm2}]
An isomorphism $(X,\le)\stackrel{\sim}{\to}(Y,\le)$ between two pointed posets must restrict to an isomorphism $(X_1,\sim)\stackrel{\sim}{\to}( Y_1,\sim)$. In particular, for an equivalence class $I\in X_1/\sim$, there must be $I'\in Y/\sim$ such that the posets $X^\infty(I)$ and $Y^\infty(I')$ are isomorphic.
The conclusion thus follows from Proposition~\ref{PropIrr}(2) and (3).
\end{proof}

\subsection{Simple graphs}By Theorem~\ref{Thm2} it is justified to consider connected Coxeter graphs. Since our motivation comes from finite Weyl groups we will furthermore focus on graphs without cycles. When  a Coxeter graph satisfies these properties, {\it i.e.} the underlying graph (obtained by ignoring labels) is simply connected as a topological space, we say it is simple.

\subsubsection{} Let $(X,\le)$ be a pointed poset. We define the subset $\VX\subset X$ as
$$\VX\;=\;\{x\in X_{>0}\,|\, \mbox{for every $i\in\mN$, there is at most one $y\in X_i$ with $y\le x$}\}.$$
To a pointed poset $(X,\le)$ we associate a bw-Coxeter graph $\cG(X,\le)$ as follows:
\begin{itemize}
\item the black vertices are identified with $X_1\subset \VX$;
\item the white vertices are identified with $\VX\backslash X_1$;
\item between two distinct black vertices $\{a,b\}\subset X_1$ we write an edge if $\mu(a,b)\ge 3$, and the edge is labelled with $\mu(a,b)$ if  $\mu(a,b)> 3$;
\item for each covering $x\lhd y$ with $x,y\in \VX$ we add an edge between the corresponding vertices;
\item for $a\in X_1$ and for $a\not\le x\in \VX$ minimal in $(X,\le)$ with the property that there exist $\{x_1,x_2\}\subset X$ with $x_1\rhd x\lhd x_2$ and $x_1\ge a\le x_2$, we add an edge between $a$ and $x$.
\end{itemize}
By construction of $\cG(X,\le)$, we can only have labels on edges between two black vertices.

\begin{lemma}\label{unique}
Consider a Coxeter system $(W,S)$ with simple graph. For $J\subset S$, we set $(X,\le):=(W^J,\le_B)$. The subset $\VX\subset W^J$ consists of those $w\in W^J$ for which 
\begin{enumerate}
\item every $v\in W^J$ with $v\le_Bw$ has unique reduced expression;
\item precisely one element of $S\backslash J$ appears in reduced expressions of $w$.
\end{enumerate}
\end{lemma}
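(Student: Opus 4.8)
The plan is to first translate membership in $\VX$ into a statement about lower intervals. Since $(W^J,\le_B,\ell)$ is graded (as recalled in the preliminaries), a down-closed subset of $[e,w]:=\{v\in W^J:v\le_B w\}$ meeting each rank at most once is automatically a chain; hence $w\in\VX$ if and only if $[e,w]$ is a chain in $(W^J,\le_B)$. I would immediately record that the rank-one elements below $w$ are exactly $\mathrm{supp}(w)\cap(S\setminus J)$, using $s\le_B w\iff s\in\mathrm{supp}(w)$ together with $X_1=S\setminus J$ from Theorem~\ref{Thm1}. Thus the ``at most one atom'' part of the chain condition is precisely condition (2); as $w\neq e$ forces at least one atom, a chain forces exactly one, so (2) is necessary.

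Next I would reduce to the essential case. Because $v\le_B w$ implies $\mathrm{supp}(v)\subseteq\mathrm{supp}(w)$, the interval $[e,w]$ computed in $W^J$ coincides with the one computed in $W_T^{T\cap J}$ for $T=\mathrm{supp}(w)$, and conditions (1),(2) are intrinsic to this interval. Moreover, since no reduced expression of an element of $W^J$ ends in a letter of $J$, the support $T$ of a $w$ satisfying (2) is connected. Hence I may assume $T=S$ is the whole (connected, acyclic) graph and $S\setminus J=\{s\}$ is a single black vertex. The heart of the matter is then the claim, proved by induction on $\ell(w)$: for $w\in W^J$ with a single black vertex, $[e,w]$ is a chain if and only if $w$ has a unique reduced expression.

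For the direction ``non-unique $\Rightarrow$ not a chain'' I would pass to $v\le_B w$ of minimal length with two reduced expressions. Minimality forces $v$ to have two distinct left descents $c\neq c'$: otherwise every reduced word begins with the unique left descent $c$, and $cv$ would be a strictly shorter element with two reduced words. Since left-multiplying an element of $W^J$ by a left descent keeps it in $W^J$ (a length estimate shows no new right descent in $J$ appears), $cv$ and $c'v$ are two distinct rank $\ell(v)-1$ elements of $W^J$ below $v$, so $[e,v]$, hence $[e,w]$, is not a chain. Conversely, if $w$ has a unique reduced expression $(c_1,\dots,c_\ell)$ then $c_1$ is its unique left descent, $c_1w=c_2\cdots c_\ell$ is a lower cover of $w$ lying in $W^J$ and inheriting the hypotheses, and by induction $[e,c_1w]$ is a chain. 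It remains to see this is the \emph{only} lower cover of $w$ in $W^J$: by the lifting property any other lower cover $v$ must have $c_1$ as a left descent, and the inductive description of the unique lower cover of $c_1w$ then pins $v$ down to the single candidate $c_1c_3\cdots c_\ell$, the deletion of $c_2$.

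Ruling out this last candidate is the step I expect to be the main obstacle: one must show that $c_1c_3\cdots c_\ell$ is either non-reduced or acquires a right descent in $J$, so that it escapes $W^J$. This is exactly where acyclicity of the Coxeter graph and the hypothesis that every generator other than $s$ lies in $J$ enter — in a tree the deleted neighbour cannot be ``bypassed'' without creating a commutation that exposes a generator of $J$ as a right descent (the example $s_1s_2s_3$ with $J=\emptyset$, which has three lower covers, shows the single black vertex hypothesis is indispensable). Granting this, $w$ has a unique lower cover in $W^J$, so $[e,w]=[e,c_1w]\cup\{w\}$ is a chain, completing the induction. Finally I would assemble the lemma: $w\in\VX$ iff $[e,w]$ is a chain; a chain makes every $[e,v]$ with $v\le_B w$ a chain, whence each such $v$ has a unique reduced expression, giving (1), while (2) was already noted; conversely (1) applied to $w$ itself gives a unique reduced expression and (2) gives the single black vertex, so the claim returns that $[e,w]$ is a chain.
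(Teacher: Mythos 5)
Your preliminary reductions are correct and close in spirit to the paper's own argument: gradedness of $(W^J,\le_B,\ell)$ does give that $w\in\VX$ exactly when $[e,w]\cap W^J$ is a chain, the atoms below $w$ are indeed $\mathrm{supp}(w)\cap(S\setminus J)$, and your derivation of (1) from the chain property (a minimal-length $v\le_B w$ in $W^J$ with two reduced expressions has two distinct left descents, producing two equal-rank elements of $W^J$ below $w$) is essentially the paper's proof of that direction. The genuine gap is the step you yourself flag and then assume (``Granting this''): ruling out the candidate cover $c_1c_3\cdots c_\ell$. This step is not merely missing; it is \emph{false} in the generality of the lemma, because your inductive claim (uniqueness of the reduced expression of $w$ \emph{alone}, plus a single black vertex) is strictly weaker than the lemma's condition (1), which requires uniqueness for \emph{every} $v\le_B w$ in $W^J$. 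Concretely, take $S=\{c_1,c_2,c_3,d\}$ with Coxeter graph the star with centre $c_2$, labels $m(c_2,c_3)=4$ and $m(c_1,c_2)=m(c_2,d)=3$ (a tree, hence a simple graph; not a finite Weyl group, but the lemma and your proof are set up for arbitrary simple graphs), and $J=\{c_1,c_2,c_3\}$. The element $w=c_1c_2c_3c_2d$ is reduced and has a unique reduced expression (no adjacent letters commute, and $c_2c_3c_2$ admits no braid move since $m(c_2,c_3)=4$), it lies in $W^J$, and only the black vertex $d$ occurs in it. Yet the deletion candidate $v=c_1c_3c_2d$ is reduced, and its complete set of reduced expressions is $\{c_1c_3c_2d,\;c_3c_1c_2d\}$: both end in $d\notin J$, so $v\in W^J$, and $v\le_B w$ by the subword property. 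Hence $v$ and $c_1w=c_2c_3c_2d$ are two distinct rank-$4$ elements of $W^J$ below $w$, so $[e,w]$ is not a chain and $w\notin\VX$, contradicting the backward direction of your key claim. (This is consistent with the lemma itself: $w$ violates condition (1) precisely because of $v$.)

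The example also pinpoints why your guiding intuition breaks down: in a tree the deleted neighbour \emph{can} be bypassed without exposing a generator of $J$ as a right descent, as soon as a labelled edge sits at a branch vertex --- commuting $c_1$ past $c_3$ in $v$ runs it back into $c_2$, and no braid move is available to push a white letter to the right end. The paper's induction never needs such an argument because it carries the full condition (1) as hypothesis: deleting an interior letter of the unique word of $w$ makes two neighbours of the deleted letter adjacent, and in a tree these are equal (so the result is not a cover) or commute (so the result has a non-unique reduced expression, and hypothesis (1), applied to this element which is $\le_B w$, expels it from $W^J$). As the counterexample shows, that use of (1) cannot be replaced by a direct ``escape from $W^J$'' argument. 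To repair your proof you must strengthen the inductive claim to the lemma's actual conditions (1)--(2), at which point it becomes the paper's proof.
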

\begin{proof}
We prove the claim by induction along $\ell: W^J\to\mN$. For $X_1=S\backslash J=\ell^{-1}(1)$ there is nothing to prove.
Now consider $w\in W^J$ with $\ell(w)>1$ and assume the claim has been proved for all $u\in W^J=X$ with $\ell(u)<\ell(w)$.

First assume that $w$ satisfies (1)-(2) in the lemma. Consider $y\in W$ with $y\lhd w$. In particular, $y$ is obtained by removing one simple reflection from the reduced expression of $w$. If this was the right-most one, then by assumption (2) we find $y\not\in W^J$.
Assume that the removed reflection was not the left-most one either. Since $w$ has unique reduced expression and the Coxeter graph is simple, there are two adjacent letters in the reduced expression of $y$ inherited from $w$ which are the same or commute. In the first case we have $y\not\lhd w$, in the second case $y$ does not have unique reduced expression and hence $y\not\in W^J$ by (1). In conclusion, there is only one element in $W^J$ covered by $w$. Moreover, by construction that element satisfies (1)-(2) and is thus by induction hypothesis in $\VX$. It now follows immediately that $w\in\VX$.

Assume conversely that $w\in\VX$. Clearly (2) is satisfied. Assume that (1) is not satisfied. Consider $x\in X$ with $x\le w$ which does not have unique reduced expression, and has minimal $\ell(x)$ under those assumptions. By minimality, $x$ has two reduced expressions with a different simple reflection on the left. Omitting those first letters in the respective expressions yields two distinct $x_1,x_2\le x\le w$ with $x_1,x_2\in W^J$ and $\ell(x_1)=\ell(x_2)$, which is a contradiction.
\end{proof}

\subsubsection{}\label{BU} Consider a simple bw-Coxeter graph which only has labels on edges between two black vertices. To that input we will associate a new connected bw-Coxeter graph. The black vertices and the edges between them, including possible labels, remain the same. On the other hand, each connected component of the subgraph of white vertices in the original graph will appear $n$ times in the new graph, identically attached to the black vertices, with $n$ the number of black vertices which neighbour the connected component.

For example, for the Coxeter pair $(W,W_J)=(E_6,A_3\times A_1)$ with bw-Coxeter graph as below on the left, the above procedure would yield the bw-Coxeter graph on the right
\begin{center}
\begin{tikzpicture}
\draw[fill=black] 

(0,0)                         
      circle [radius=.1]  --
(1,0) 
      --
(2,0) 
       --  
    (3,0)      
    circle [radius=.1]  --
      (4,0)      
    
    (2,0) --++(90:1)           
    
    (6.4,0) node {$\Rightarrow$}
        
    (9,0)
    circle [radius=.1] 
    
    (11,.5)
    --
    (12,0) circle [radius=.1]    
;

\draw

(1,0) 
      circle [radius=.1]
(2,0) 
      circle [radius=.1]    
      (2,0) --++(90:1)         circle [radius=.1] 
      (4,0)      
    circle [radius=.1]

    (9,0) --
    (10,0.5) circle [radius=.1] 
    --
   (11,.5) circle [radius=.1]--
   (11,1.5) circle [radius=.1]
   
    (9,0) --
    (10,-0.5) circle [radius=.1] 
    --
   (11,-.5) circle [radius=.1]--
   (11,-1.5) circle [radius=.1]
   
   (11,-.5) --
   (12,0) --
   (13,0) circle [radius=.1]
   
   (13.5,0) node {.}
    ;

\end{tikzpicture}
\end{center}
 It is clear that this procedure maps non-isomorphic bw-Coxeter graphs to non-isomorphic bw-Coxeter graphs.

\begin{theorem}\label{ThmNew}
Consider a Coxeter system $(W,S)$ with simple graph and with $J\subset S$ such that no edge which meets $J$ is labelled. The bw-Coxeter graph $\cG(W^J,\le_B)$ is obtained from the one of $(W,W_J)$ by procedure~\ref{BU}.
\end{theorem}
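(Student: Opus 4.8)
The plan is to fix the pointed poset $(X,\le):=(W^J,\le_B)$ and to compare $\cG(X,\le)$ with the output of procedure~\ref{BU} applied to the bw-Coxeter graph of $(W,W_J)$, matching the two graphs one vertex-type and one edge-type at a time. The black vertices require essentially no work: by Theorem~\ref{Thm1}(1) they are $X_1=S\setminus J$, and by Theorem~\ref{Thm1}(2) we have $\mu(a,b)=m(a,b)$ for distinct $a,b\in X_1$, so both the edges among black vertices and their labels agree with those of $(W,W_J)$, which is exactly the black part that procedure~\ref{BU} leaves untouched. All the content therefore lies in describing the white vertices $\VX\setminus X_1$ and the edges meeting them.

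First I would classify the white vertices. Using Lemma~\ref{unique} I claim that every $x\in\VX$ with $\ell(x)\ge 2$ is a \emph{path element}: its unique reduced expression ends on the right in a black letter, which — being the unique black letter occurring in $x$ — is some $a\in S\setminus J$, and its support is a simple path $a,t_1,\dots,t_k=t$ in the Coxeter graph with $t_1,\dots,t_k$ lying in a single white component $C$ of $J$ adjacent to $a$. Write $x=x^{(a)}_t$ for this element. I would prove this by induction on $\ell(x)$: by the argument in the proof of Lemma~\ref{unique} the unique lower cover of $x$ in $W^J$ is obtained by deleting the leftmost letter $s$ of $x$, and simplicity of the graph forces $s$ to be a white vertex adjacent to the leftmost letter of that lower cover and lying outside its support, so that $x$ extends the path of its lower cover by the single new vertex $s\in C$. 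Conversely each such path element lies in $\VX$, since its Bruhat down-set inside $W^J$ is the chain $\{e,a,x^{(a)}_{t_1},\dots,x^{(a)}_{t}\}$ of contiguous suffixes. Consequently the white vertices partition into families $\{x^{(a)}_t:t\in C\}$ indexed by pairs $(a,C)$ with $a$ a black neighbour of the component $C$, each family being in bijection with the vertex set of $C$.

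Next I would match the two remaining kinds of edges of $\cG(X,\le)$. A covering edge $x\lhd y$ with $x,y\in\VX$ preserves the black letter, hence stays inside one family; for the family $(a,C)$ these coverings are exactly $x^{(a)}_{t'}\lhd x^{(a)}_t$ for each edge $t'-t$ of $C$, together with $a\lhd x^{(a)}_{t_0}$ for the neighbour $t_0$ of $a$ in $C$, so they reproduce a copy of the tree $C$ attached to its root $a$ precisely as $C$ is attached to $a$ in $(W,W_J)$. For the special (third-bullet) edges I would show that, taking a black vertex $b\ne a$ in the role of the symbol $a$ in that bullet, an edge $b-x^{(a)}_t$ is created precisely when $b$ is a black neighbour of $C$ whose unique attaching vertex in $C$ (unique because the graph is a tree) is $t$: in that case $b$ fails to commute only with $t$, so $x^{(a)}_t$ has exactly the two upper covers $b\,x^{(a)}_t$ and $x^{(a)}_t\,b$ lying above $b$, and is minimal with this property; whereas if $b$ attaches to $C$ at a vertex other than $t$, or is not adjacent to $C$, then either $b$ commutes with the whole support of $x^{(a)}_t$, giving a single upper cover above $b$, or a strictly smaller element already carries two such covers (so $x^{(a)}_t$ fails minimality). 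Thus each family is attached to all of its black neighbours just as $C$ is, and $C$ produces exactly as many copies as it has black neighbours. Finally I would note that the special edges joining two black vertices merely re-create edges already present by the $\mu$-rule, so they cause no conflict. Combining the three matchings yields exactly procedure~\ref{BU}.

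The hard part will be the white-vertex analysis together with the minimality bookkeeping for the special edges: one must control reduced words and Bruhat down-sets inside the tree Coxeter group to pin down $\VX$, and, in verifying the special edges, carefully locate the genuine minimal witness so as to exclude spurious attachments — most delicately when $b$ is adjacent to the root $a$ itself, where the minimal witness is $a$ rather than any $x^{(a)}_t$.
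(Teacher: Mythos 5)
Your strategy is the same as the paper's: identify the black part of $\cG(W^J,\le_B)$ via Theorem~\ref{Thm1}, classify $\VX\backslash X_1$ as ``path elements'' indexed by a black vertex $a$ together with a white vertex reachable from $a$ through $J$ (this is exactly the paper's bijection $\phi$), then match the covering edges with the edges of each white component and the special third-bullet edges with the attaching vertices of the remaining black neighbours. Your minimality bookkeeping for the special edges is correct, including the delicate case where $b$ neighbours the root $a$: there the minimal witness is $a$ itself, so the created edge only duplicates a black--black edge. That analysis is the part the paper disposes of by citing the proof of Proposition~\ref{PropIrr}(1), and your account of it is sound.

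There is, however, a genuine gap in your classification of the white vertices, which you correctly single out as the hard part. Your inductive step asserts that ``simplicity of the graph forces $s$ to be a white vertex adjacent to the leftmost letter of that lower cover and lying outside its support''. Simplicity forces no such thing; what does is the theorem's hypothesis that no edge meeting $J$ is labelled, and your argument never invokes that hypothesis anywhere. The claim you are proving by induction is in fact false without it. Take $(W,W_J)=(G_2,A_1)$ with $S=\{s,t\}$, $m(s,t)=6$, $J=\{t\}$: the graph is simple, the poset $(W^J,\le_B)$ is the chain $e<s<ts<sts<tsts<ststs$, so $\VX$ consists of all five nonidentity elements, and $sts$, $tsts$, $ststs$ are not path elements (letters repeat). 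Correspondingly $\cG(W^J,\le_B)$ has four white vertices where procedure~\ref{BU} would produce one; labels on edges inside $J$ cause the same failure, e.g.\ for $(B_3,B_2)$. The repair is to use the hypothesis at exactly your inductive step: if the prepended letter coincides with the black letter $a$ or with the letter $t_{k-1}$ already in the support, then after sliding it past commuting letters (here simplicity is used) it sits in a configuration $uvu$ where the bond $u$--$v$ meets $J$ and hence has $m(u,v)=3$; the braid relation $uvu=vuv$ then yields either a second reduced expression or a reduced expression ending in a letter of $J$, contradicting membership in $\VX$ via Lemma~\ref{unique}. When $m(u,v)\ge 4$ this length-three braid move is unavailable, which is precisely why the counterexamples above survive. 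With that step supplied, your proof is complete and coincides with the paper's.
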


\begin{proof}Set $(X,\le)=(W^J,\le_B)$.
Let $V$ denote the set of pairs $(s,t)$ with $s\in S\backslash J$ and $t\in J$ such that the unique minimal path from $s$ to $t$ in the Coxeter graph of $(W,S)$ contains, besides $s$, only elements of $J$. It follows easily from Lemma~\ref{unique} that we have a bijection
$$\phi:V\;\stackrel{1:1}{\to}\; \VX\backslash X_1,\quad (s,t)\mapsto t r_d r_{d-1}\cdots r_1s.$$
where $r_i$ label the vertices in the Coxeter graph along the minimal path from $s$ to $t$ in the obvious way. In case $m(s,t)>2$, we thus have $d=0$.

By Theorem~\ref{Thm1}, the Coxeter subgraphs of $\cG(X,\le)$ and the bw-Coxeter graph of $(W,W_J)$ consisting of black vertices (and labelled edges between them) are canonically isomorphic.
By the description of $\phi$, we have an edge between $\phi(s,t)$ and $\phi(s,t')$ in $\cG(X,\le)$ if and only if there is an edge between $t$ and $t'$. By definition of $\cG(X,\le)$ there is no edge between $\phi(s,t)$ and $\phi(s',t')$ whenever $s\not=s'$. The proof of Proposition~\ref{PropIrr}(1) shows that 
there is an edge between $\phi(s,t)$ and $s'\in X_1$ in $\cG(X,\le)$ if and only if $t\in J$ neighbours $s'$.

It now follows that $\cG(X,\le)$ is obtained from the bw-Coxeter graph of $(W,W_J)$ by the procedure of \ref{BU}.
Concretely, consider a connected component $\mathring{I}$ in the white part of the graph of $(W,W_J)$ and label the neighbours of $\mathring{I}$ in $S\backslash J$ by $\{s_i\,|\, 1\le i\le n\}$. Then the subgraph $\mathring{I}$ of $S$ yields $n$ copies in $\cG(X,\le)$ and they correspond to the sets of vertices $\{\phi(s_i,t)\,|\, t\in\mathring{I}\}$, for $1\le i\le n$.\end{proof}

\subsection{Finite Weyl groups}

\begin{theorem}\label{Thm3}
The only non-isomorphic Coxeter pairs $(W,W_J)$ and $(U,U_K)$, with $W,U$ irreducible finite Weyl groups which lead to isomorphic posets $(W^J,\le_B)\simeq (U^K,\le_B)$ are 
\begin{enumerate}
\item $(A_{2n+1},A_{2n})\;\longleftrightarrow\; (B_{n+1},B_n),\;\;$ for $n\ge 2$;
\item $(B_n, A_{n-1})\;\longleftrightarrow\; (D_{n+1},A_n),\;\;$ for $n\ge 3$;
\item $(A_3,A_2)\;\longleftrightarrow\; (B_2,A_1)$;
\item $(A_5,A_4)\;\longleftrightarrow\; (G_2,A_1)\;\longleftrightarrow\; (B_3,B_2) $;
\item $(W,W)\;\longleftrightarrow\; (U,U)$, for any $U\not\simeq W$.
\end{enumerate}
\end{theorem}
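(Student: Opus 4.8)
The plan is to leverage the reconstruction machinery of Theorems~\ref{Thm1} and \ref{ThmNew}, using the crucial fact that every finite irreducible Weyl group has a \emph{simple} Coxeter graph (each is a tree) whose only labelled edges carry the label $m=4$ (in types $B_n$ and $F_4$) or $m=6$ (in type $G_2$). First I would dispose of item~(5): the poset $(W^J,\le_B)$ is a singleton precisely when $W=W_J$, and all singletons are isomorphic, so these pairs always give equivalent (trivial) posets and nothing more. From now on I assume both posets are non-trivial. The engine is that procedure~\ref{BU} is injective on isomorphism classes of bw-Coxeter graphs, so by Theorem~\ref{ThmNew} any pair $(W,W_J)$ in which \emph{no} labelled edge meets $J$ is recovered, up to isomorphism, from $(W^J,\le_B)$ alone. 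Consequently two such \emph{good} pairs can never yield isomorphic posets without being isomorphic, and every non-trivial coincidence must involve at least one \emph{bad} pair, namely one of $B_n$, $F_4$ or $G_2$ in which the unique labelled edge has an endpoint in the parabolic.

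Next I would extract the available invariants. For any isomorphism $(W^J,\le_B)\simeq(U^K,\le_B)$, Theorem~\ref{Thm1} forces the black subgraphs on $S\backslash J$ and $T\backslash K$ to agree as labelled graphs and the neighbour-counts $\nu$ to agree, while $|W^J|=|W|/|W_J|$ and $\Le(W^J,\le_B)=\ell(w_0)-\ell(w_0^J)$ are numerical invariants. The point about bad pairs is that, since the single labelled edge already meets the parabolic, it lies outside the black part; hence $T\backslash K$ is an \emph{unlabelled} forest, necessarily a disjoint union of type $A$ paths given the shapes of the $B$, $F$, $G$ diagrams. Thus any partner of a bad pair must also have black subgraph a union of unlabelled paths, which already rules out most candidates.

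I would then treat the \emph{chain} sub-case, which accounts for items~(1), (3) and (4). Using the criterion that $(W^J,\le_B)$ is totally ordered exactly when $|W^J|=\Le(W^J,\le_B)+1$ (every rank a singleton), one checks that among irreducible finite Weyl groups this happens only for $(A_L,A_{L-1})$ (length $L$), for $(B_m,B_{m-1})$ (length $2m-1$), and for $(G_2,A_1)$ (length $5$). Hence a chain of length $L$ arises from $(A_L,A_{L-1})$ for every $L$, additionally from $(B_{(L+1)/2},B_{(L-1)/2})$ when $L$ is odd and $\ge 3$, and additionally from $(G_2,A_1)$ when $L=5$. Collecting these and writing $L=2n+1$ gives precisely $(A_{2n+1},A_{2n})\leftrightarrow(B_{n+1},B_n)$ --- item~(1) for $n\ge 2$ and item~(3) for $n=1$, where $B_1=A_1$ --- together with the extra partner $(G_2,A_1)$ at $L=5$, which is item~(4).

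It remains to handle the non-chain bad pairs. The decisive one is $(B_n,A_{n-1})$, whose quotient is the $B_n$ spin minuscule poset of size $2^n$; I would identify it with the half-spin minuscule poset of the \emph{good} pair $(D_{n+1},A_n)$, either by the classical isomorphism of minuscule posets or by computing $\cG$ of the poset and inverting the (here trivial) procedure~\ref{BU}, which pins down $(D_{n+1},A_n)$ as the unique good partner and yields item~(2) for $n\ge 3$. Every other bad pair --- the remaining parabolics of $B_n$ meeting the labelled edge, and all parabolics of $F_4$ meeting its labelled edge --- must be eliminated by showing its Bruhat poset matches neither a chain, nor a $B/D$ minuscule poset, nor another bad pair; here one compares the reconstructed black graph and $\nu$ together with $|W^J|$, $\Le$, and if necessary the rank-size sequence $(|X_i|)_i$. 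I expect this final elimination to be the main obstacle, since the bad $B_n$ pairs form an infinite family whose Bruhat quotients must be understood well enough to exclude every spurious isomorphism; proving the genuine coincidence in item~(2) honestly is the other delicate point, as it rests on the minuscule poset identification rather than on the general reconstruction theorems.
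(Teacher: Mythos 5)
Your opening reduction is exactly the paper's: by Theorem~\ref{ThmNew} and the injectivity of procedure~\ref{BU}, two pairs in which no labelled edge meets the parabolic can only give isomorphic posets if the pairs themselves are isomorphic, so every non-trivial coincidence must involve a ``bad'' pair of type $B_n$, $F_4$ or $G_2$. Your handling of the chain case is a genuinely different and more elementary route to items (1), (3) and (4): the paper instead computes $\cG(W^J,\le_B)$ for $(B_n,B_{n-1})$ and $(G_2,A_1)$ inside Lemma~\ref{LemNew}. Be aware, though, that your classification of chain quotients is asserted rather than proved; it is true (a chain forces $|S\backslash J|=1$, after which one compares $|W^J|$ with $\Le(W^J,\le_B)+1$ over all maximal parabolic quotients of the irreducible finite Weyl groups), but that verification is of the same case-by-case nature as the computations you are trying to bypass. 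Your appeal to the minuscule poset identification for item (2) is on the same footing as the paper's ``well-known and easily checked''.

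The genuine gap is the step you yourself flag as ``the main obstacle'': eliminating the remaining bad pairs. This is not a loose end but the core of the hard direction of the theorem. Concretely, the bad type-$B$ pairs that are neither chains nor $(B_n,A_{n-1})$ form two infinite two-parameter families, $(B_{m+n},P\times A_{n-1})$ and $(B_{m+n},Q\times B_{n-1})$ with $m\ge 1$ and $P<A_{m-1}$, $Q<A_m$ arbitrary parabolics, and there are in addition several $F_4$ pairs; one must show that none of these matches any good pair, any other bad pair, or each other. The invariants you list (black graph, $\nu$, $|W^J|$, $\Le$, rank sizes) do not settle this until they are actually computed, and the observation that a partner's black graph must be a union of unlabelled type $A$ paths still leaves infinitely many good candidates. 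This is precisely what Lemma~\ref{LemNew} accomplishes: it computes $\cG(W^J,\le_B)$ for every bad pair, which pins down at most one candidate partner in each case --- $(D_{m+n+1},P\times A_n)$, $(A_{m+2n-1},Q\times A_{2n-2})$, $(D_5,A_3)$ for $(F_4,A_2)$ and $(E_6,D_5)$ for $(F_4,B_3)$ --- and then the proof of Theorem~\ref{Thm3} disposes of each candidate by the length computation: the differences of $\Le$ are $-m$, $\tfrac{1}{2}m(m+1)$, $7$ and $-1$, never zero. Until you carry out this computation (or an equivalent one), the completeness of the list in the theorem remains unproven.
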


We note that case (3) can be seen as a limit of both series in (1) and (2). Before proving the theorem, we establish the following lemma.

\begin{lemma}\label{LemNew}
Besides the cases listed in Theorem~\ref{Thm3}, the only non-isomorphic Coxeter pairs $(W,W_J)$ and $(U,U_K)$ with $W,U$ irreducible finite Weyl groups which lead to isomorphic bw-Coxeter graphs $\cG(W^J,\le_B)\simeq\cG(U^K,\le_B)$ are
\begin{itemize}
\item $(F_4,A_2)\;\longleftrightarrow\; (D_5,A_3)$;
\item $(F_4,B_3)\;\longleftrightarrow\; (E_6,D_5)$;
\item $(B_{m+n},P\times  A_{n-1})\;\longleftrightarrow\; (D_{m+n+1},P\times  A_{n} )$, for every parabolic subgroup $P<A_{m-1}$, using the canonical inclusions $A_{m-1}\times A_{n-1}<B_{m+n}$ and $A_{m-1}\times A_{n}<D_{m+n+1}$, for $n\ge 2$ and $m\ge 1$; 
\item $(B_{m+n}, Q\times B_{n-1})\;\longleftrightarrow\; (A_{m+2n-1}, Q\times A_{2n-2})$, for every parabolic subgroup $Q<A_{m}$, using the canonical inclusions $A_{m}\times B_{n-1}<B_{m+n}$ and $A_m\times A_{2n-2}<A_{m+2n-1}$ for $n\ge 2$ and $m\ge 1$ (where $B_{n-1}$ for $n=2$ is to be interpreted as $A_1$). 
\end{itemize}
\end{lemma}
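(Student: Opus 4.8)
The plan is to use Theorem~\ref{ThmNew} to reduce the classification to a short, explicit list of degenerate configurations. Every Coxeter graph of an irreducible finite Weyl group is a tree, hence simple, so Theorem~\ref{ThmNew} applies to any pair $(W,W_J)$ in which no labelled edge meets $J$; I will call such a pair \emph{unlabelled-boundary}. For such a pair, $\cG(W^J,\le_B)$ is the image of the bw-Coxeter graph of $(W,W_J)$ under the procedure of \ref{BU}, and since that procedure is injective on isomorphism classes, two unlabelled-boundary pairs produce isomorphic graphs $\cG$ exactly when they are isomorphic as Coxeter pairs. Therefore no coincidence between non-isomorphic pairs can occur within this class, and every coincidence in the statement must involve at least one \emph{labelled-boundary} pair, meaning a pair in which some labelled edge has an endpoint in $J$.

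Next I would record the rigidity supplied by Theorem~\ref{Thm1}: the labelled black subgraph of $\cG(W^J,\le_B)$ is canonically the Coxeter graph on $S\backslash J$, so any isomorphism $\cG(W^J,\le_B)\simeq\cG(U^K,\le_B)$ forces the black subgraphs, with their labels, to agree. This constrains the search drastically, since the black subgraph is itself a disjoint union of Dynkin diagrams. I would then enumerate the labelled-boundary pairs among irreducible finite Weyl groups. Labelled edges occur only in $B_n$ (a single label-$4$ edge), in $F_4$ (a single label-$4$ edge) and in $G_2$ (a single label-$6$ edge), and a pair is labelled-boundary precisely when $J$ contains an endpoint of one of these edges. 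The $F_4$ and $G_2$ possibilities form a finite list that can be treated by hand, whereas $B_n$ gives rise to two one-parameter families, according to whether the label-$4$ edge lies on the white--black boundary or is engulfed by a type-$B$ white sub-chain.

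For each labelled-boundary pair I would compute $\cG(W^J,\le_B)$ directly, using Lemma~\ref{unique} to determine $\VX$ and then applying the edge rules in the definition of $\cG$. The essential point is to understand how a labelled edge meeting $J$ is \emph{resolved} in $\cG$. I expect the outcome to be that a boundary label-$4$ edge reproduces, near the incident black vertex, exactly the doubled attachment that procedure~\ref{BU} builds from a type-$D$ fork, while a label-$4$ edge buried in a white type-$B$ chain reproduces the attachment that procedure~\ref{BU} builds from a folded type-$A$ chain of double length. This is the combinatorial shadow of the diagram foldings $D_{m+n+1}\to B_{m+n}$ and $A_{m+2n-1}\to B_{m+n}$, and it is what converts a type-$B$ parabolic quotient into the type-$D$ and type-$A$ quotients appearing on the right-hand sides of the two families. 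Granting this resolution, matching each labelled-boundary graph against the procedure~\ref{BU} image of a candidate partner (pinned down uniquely by injectivity whenever it exists), and also comparing labelled-boundary pairs directly with one another, yields precisely the two families, the sporadic identifications $(F_4,A_2)\leftrightarrow(D_5,A_3)$ and $(F_4,B_3)\leftrightarrow(E_6,D_5)$, and the coincidences already recorded in Theorem~\ref{Thm3}, among them the small exceptions generated by the label-$6$ edge of $G_2$.

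The hard part is the uniform computation of $\cG(W^J,\le_B)$ across a labelled edge. The proof of Theorem~\ref{ThmNew} relied on the unlabelled-boundary hypothesis precisely where reduced expressions and the set $\VX$ were analysed, so none of that argument transfers directly, and the injectivity of procedure~\ref{BU} is unavailable on this side. I would therefore have to verify by an explicit, parameter-free argument that, near the special edge, the resolution introduces no spurious white component or edge, that the black subgraph is unchanged, and that the duplication multiplicities agree with those on the claimed partner; only then does the identification become an equality of bw-Coxeter graphs rather than a mere coincidence of local pictures. Once the finitely many sporadic computations and the two uniform $B_n$ computations are in hand, completeness is immediate: the black-subgraph invariant together with the short list of possible labelled configurations leaves no room for any further coincidence.
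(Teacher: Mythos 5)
Your proposal follows the paper's proof essentially step for step: reduce via Theorem~\ref{ThmNew} and the injectivity of procedure~\ref{BU} to pairs in which a labelled edge meets $J$, enumerate those pairs ($F_4$ and $G_2$ by hand, plus the two type-$B$ families), compute $\cG(W^J,\le_B)$ directly using Lemma~\ref{unique}, and match the results; the partners you predict (type-$D$ fork for a boundary label-$4$ edge, folded type-$A$ chain for an engulfed one, and the sporadic $F_4$, $G_2$ identifications) are exactly those the paper finds. The case-by-case computations you defer as ``the hard part'' are precisely what the paper's proof consists of --- it draws $\cG$ explicitly for the $F_4$ and $G_2$ cases and treats the type-$B$ families as a straightforward calculation with Lemma~\ref{unique} --- so your plan is correct and coincides with the paper's argument.
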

\begin{proof}
By Theorem~\ref{ThmNew}, a necessary condition for an isomorphism $\cG(W^J,\le_B)\simeq\cG(U^K,\le_B)$ is that at least one of the original bw-Coxeter graphs of $(W,W_J)$ and $(U,U_K)$ contains a labelled edge linked to a white vertex. First we calculate $\cG(W^J,\le_B)$ for all pairs $(W,W_J)$ with such an edge and for $W\in \{F_4,G_2\}$. On the left-hand side we write the original bw-Coxeter graph of $(W,W_J)$, on the right-hand side $\cG(W^J,\le_B)$.
\begin{center}
\begin{tikzpicture}
\draw[fill=black] 
(-1.5,0) node {$(F_4,B_3)$}

(0,0)                         
    circle [radius=.1]  --
(1,0) 
      --node [midway,above] {$4$}
(2,0) 
      --    
(3,0)

   (5,0) node {$\Rightarrow$}

(7,0)                         
    circle [radius=.1]  --
(8,0)

    ;

\draw
(0,0)                         
      circle [radius=.1]  --
(1,0) 
      circle [radius=.1] --node [midway,above] {$4$}
(2,0) 
      circle [radius=.1] --    
(3,0)
    circle [radius=.1]      
    
    (7,0)                         
      circle [radius=.1]  --
(8,0) 
      circle [radius=.1] node [above] {} --
(9,0) 
      circle [radius=.1] node [above] {}    --  
    (10,0)      
    circle [radius=.1] node [above] {} --
      (11,0)      
    circle [radius=.1] 
    (9,0) --++(90:1)         circle [radius=.1]    
;

\end{tikzpicture}

\vspace{2mm}

\begin{tikzpicture}
\draw[fill=black] 
(-1.5,0) node {$(F_4,B_2)$}

(0,0)                         
    circle [radius=.1]  --
(1,0) 
      --node [midway,above] {$4$}
(2,0) 
      --    
(3,0)
circle [radius=.1]    

   (5,0) node {$\Rightarrow$}

(8,0.5)                         
    circle [radius=.1]  --
(9,0.5)      
         
         (10,0.5) --
         (11,-0.5)
         circle [radius=.1]  
    ;

\draw
(0,0)                         
      circle [radius=.1]  --
(1,0) 
      circle [radius=.1] --node [midway,above] {$4$}
(2,0) 
      circle [radius=.1] --    
(3,0)
      
    (9,0.5)      
    circle [radius=.1] {} --
    (10,0.5)
    circle [radius=.1] --
    (11,0.5)
    circle [radius=.1]--
    (12,0.5)
    circle [radius=.1]
    
    (8,0.5) --
    (9,-0.5) circle [radius=.1] {} --
    (10,-0.5) circle [radius=.1] {} --
    (11,-0.5)
    (7,-.5)circle [radius=.1] {} --
    (8,-.5) circle [radius=.1] {} --
    (9,-.5)
      ;

\end{tikzpicture}

\vspace{2mm}

\begin{tikzpicture}
\draw[fill=black] 
(-1.5,0) node {$(F_4,A_2)$}

(0,0)                         
    circle [radius=.1]  --
(1,0) 
      circle [radius=.1] --node [midway,above] {$4$}
(2,0) 
      --    
(3,0)

   (5,0) node {$\Rightarrow$}

(7,0)                         
    circle [radius=.1]  --
(8,0)      
        circle [radius=.1] --
        (9,0)
         
    ;

\draw
(2,0) 
      circle [radius=.1] --    
(3,0)
    circle [radius=.1]      
        (9,0)      
    circle [radius=.1] node [above] {} --
      (9,0)      
    circle [radius=.1] node [above] {}     
    
(9,0) 
      circle [radius=.1] node [above] {}      
            
(9,0) --++ (30:1.15)
      circle [radius=.1] 
      
(9,0) --++ (-30:1.15)
      circle [radius=.1]      
;

\end{tikzpicture}

\begin{tikzpicture}
\draw[fill=black]

    (5.5,0) node {$(G_2,A_1)$}
    (7,0) 
      circle [radius=.1] --node [midway,above] {$6$}
(8,0) 

(12,0) node {$\Rightarrow$}

(14,0) circle [radius=.1] --
(15,0)
      ;

\draw

(8,0) 
      circle [radius=.1]
      (15,0)
       circle [radius=.1] --
       (16,0)
       circle [radius=.1] --
       (17,0)
       circle [radius=.1] --
       (18,0)
       circle [radius=.1]
       
       ;
\end{tikzpicture}
\end{center}
The resulting graph for $(F_4,B_2)$ lacks the symmetry to come from a label free case via procedure~\ref{BU}.  It will follow from the case-by-case study for type $B$ below that $\cG(F_4/B_2,\le_B)$ is also different from those cases. 
The other right-hand sides are the (original) bw-Coxeter graphs, invariant under procedure~\ref{BU}, of respectively $(E_6,D_5)$, $(D_5,A_3)$ and $(A_5,A_4)$. 

The remaining Coxeter pairs with a labelled edge linking to the parabolic subgroup are

(a) $(B_n,A_{n-1})$ for $n\ge2$,  

(b) $(B_{m+n},P\times  A_{n-1})$ for some parabolic subgroup $P<A_{m-1}$ for the canonical inclusion $A_{m-1}\times A_{n-1}<B_{m+n}$ for $n\ge 2$ and $m\ge 1$, 

(c) $(B_n,B_{n-1})$ for $n\ge 3$,

(d) $(B_{m+n}, Q\times B_{n-1})$ for some parabolic subgroup $Q<A_{m}$ for the canonical inclusion $A_{m}\times B_{n-1}<B_{m+n}$ for $n\ge 2$ and $m\ge 1$.

It is again straightforward to calculate the corresponding graphs $\cG(W^J,\le_B)$, for instance using Lemma~\ref{unique}. For (a) the graph $\cG(W^J,\le_B)$ is the (original) bw-Coxeter graph of $(D_{n+1},A_{n})$ if $n\ge 3$, and of $(A_{3},A_{2})$ if $n=2$. For (c) it is the (original) bw-Coxeter graph of $(A_{2n-1},A_{2n-2})$. As an immediate extension of that, the cases (b) and (d) yield isomorphisms of bw-Coxeter graphs as displayed in the lemma. 
\end{proof}

\begin{proof}[Proof of Theorem~\ref{Thm3}]
That the listed pairs lead to isomorphic posets is well-known and easily checked. It remains to show that the cases in Lemma~\ref{LemNew} do not lead to isomorphic posets. Using the values in appendix~\ref{App}, we find
$$\Le(F_4/A_2)-\Le(D_5/A_3)=7,\quad\Le(F_4/B_3)-\Le(E_6/D_5)=-1,$$
$$\Le(B_{m+n}/(P\times  A_{n-1}))-\Le(D_{m+n+1}/(P\times A_n))=-m,\quad\mbox{and}$$
$$\Le(B_{m+n}/(Q\times B_{n-1}))-\Le(A_{m+2n-1}/(Q\times A_{2n-2}))=\frac{1}{2}m(m+1).$$
Consequently, the lengths of the posets never agree,
which concludes the proof. \end{proof}

\section{Category $\cO$}\label{SecO}
In this section we work over the field $\mC$ of complex numbers.
\subsection{Reduction to simple Lie algebras}

\subsubsection{}\label{SetUpDelO}Fix a reductive Lie algebra $\fg$ with Cartan and Borel subalgebras $\fh\subset\fb$. Assume furthermore that we have a non-trivial Lie algebra decomposition $\fg=\fg_1\oplus \fg_2$. This defines Borel and Cartan subalgebras $\fh_i\subset\fb_i\subset\fg_i$, for $1\le i\le 2$ and a factorisation of Coxeter groups $W=W_1\times W_2$. Fix also an antidominant weight $\lambda\in\fh^\ast$, which restricts to weights $\lambda^i=\lambda|_{\fh_i}$. For $1\le i\le 2$, we abbreviate $\cO_{\lambda_i}(\fg_i,\fb_i,\fh_i)$ to $\cO^i$ and $\cO_{\lambda}(\fg,\fb,\fh)$ to $\cO$. 

\subsubsection{} Following \cite{BGG}, we can define a projective object $P^i_\mu$ in $\cO^i$ for each $\mu$ in $W_i\cdot\lambda_i$, such that we have a canonical isomorphism
\begin{equation}\label{caniso}\Hom_{\fg_i}(P^i_{\mu},M)\;\stackrel{\sim}{\to}\; \Hom_{\fh_i}(\mC_\mu,M),\qquad\mbox{for every $M\in\cO^i$}.\end{equation}
Note that the \eqref{caniso} remains valid for $\fg$-modules $M$ in the ind-completion $\Ind \cO^i$, which is defined as in \ref{DefO} but without the condition of finite generation.
Isomorphism \ref{caniso} leads to $\mC$-linear equivalences
$$\Hom_{\fg_i}(\oplus_\mu P^i_\mu,-):\,\cO^i\,\stackrel{\sim}{\to}\,A_i\mbox{-mod},\qquad\mbox{with}\quad A_i:=\End_{\fg_i}(\oplus_\mu P^i_\mu)^{\op}.$$

Each $\kappa\in W\cdot\lambda$ can be written uniquely a sum $\kappa^1+\kappa^2$ with $\kappa^i\in W_i\cdot\lambda^i$.
It follows from a direct calculation, using the extension of \eqref{caniso} to $\Ind\cO^i$, that the tensor product $P_\kappa:=P^1_{\kappa^1}\otimes_{\mC}P^2_{\kappa^2}$ is a $\fg$-module which induces an isomorphism
$$\Hom_{\fg}(P_{\kappa},N)\;\stackrel{\sim}{\to}\; \Hom_{\fh}(\mC_\kappa,N),\qquad\mbox{for every $N\in\cO$}.$$ 
In conclusion, we find a $\mC$-linear equivalence
$$\cO\,\simeq\,(A_1\otimes A_2)\mbox{-mod}.$$
In order to formulate this result without mentioning of the auxiliary algebras $A_i$, we can employ Deligne's tensor product of abelian $\mC$-linear categories $-\boxtimes-$, see \cite[Section~5]{De}.
\begin{prop}
For a reductive Lie algebra $\fg=\fg_1\oplus\fg_2$ as in \ref{SetUpDelO}, we have a $\mC$-linear equivalence
$$\cO_\lambda(\fg,\fb,\fh)\;\simeq\; \cO_{\lambda_1}(\fg_1,\fb_1,\fh_1)\boxtimes \cO_{\lambda_2}(\fg_2,\fb_2,\fh_2).$$
\end{prop}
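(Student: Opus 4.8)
The plan is to reduce the statement to the algebra-level equivalence that the construction preceding the proposition has already produced, and then to convert it into the coordinate-free form using the behaviour of Deligne's tensor product on categories of finite-dimensional modules. Concretely I would run the chain
$$\cO\;\simeq\;(A_1\otimes A_2)\text{-mod}\;\simeq\;A_1\text{-mod}\boxtimes A_2\text{-mod}\;\simeq\;\cO^1\boxtimes\cO^2,$$
in which the two outer equivalences are precisely the ones recorded above — for $\cO$ itself and for each factor $\cO^i\simeq A_i\text{-mod}$ — so that the only genuinely new input is the middle identification.

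For the middle step I would invoke the defining property of $\boxtimes$. For abelian $\mC$-linear categories, $\mathcal C_1\boxtimes\mathcal C_2$ is characterised by the universal property that right exact functors out of it correspond to functors $\mathcal C_1\times\mathcal C_2\to\mathcal D$ which are right exact in each variable, and for $\mathcal C_i=A_i\text{-mod}$ with $A_i$ finite dimensional over $\mC$ this universal object is realised explicitly by $(A_1\otimes_\mC A_2)\text{-mod}$, the canonical bilinear functor being $(M_1,M_2)\mapsto M_1\otimes_\mC M_2$; see \cite[Section~5]{De}. By \ref{ResSoergel} every block of $\cO$ is the module category of a finite-dimensional algebra, so both $\cO^1$ and $\cO^2$ are of the required form and this identification applies without change. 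Stringing the three equivalences together then yields the proposition with no further Lie-theoretic content.

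The main point to secure is therefore the leftmost equivalence $\cO\simeq(A_1\otimes A_2)\text{-mod}$, where all the representation theory sits, and this is exactly the ``direct calculation'' indicated in the text. The substance there is that $P_\kappa=P^1_{\kappa^1}\otimes_\mC P^2_{\kappa^2}$ genuinely lies in $\cO$ (not merely in $\Ind\cO^i$) and corepresents the $\kappa$-weight-space functor on all of $\cO$, which is what the displayed $\Hom$-isomorphism asserts via the extension of \eqref{caniso} to the ind-completion; granting this, projectivity of $P_\kappa$ and the fact that the $P_\kappa$ form a projective generator are immediate, and reading $\Hom_\fg(P_\kappa,P_{\kappa'})$ as the $\kappa$-weight space of $P_{\kappa'}$ factors it as a tensor product of the corresponding weight spaces of the two tensor factors, yielding the algebra isomorphism $\End_\fg(\oplus_\kappa P_\kappa)^{\op}\cong A_1\otimes A_2$. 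This weight-space factorisation, together with confirming that the Deligne identification really is an equivalence of abelian (and not merely additive) categories, is where I would expect to spend the most care.
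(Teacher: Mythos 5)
Your proposal is correct and takes essentially the same route as the paper: the paper's ``proof'' is precisely the discussion preceding the proposition, which establishes $\cO\simeq(A_1\otimes A_2)\text{-mod}$ via the tensor-product projectives $P_\kappa=P^1_{\kappa^1}\otimes_\mC P^2_{\kappa^2}$ and then reformulates this through the identification of $(A_1\otimes A_2)\text{-mod}$ with $A_1\text{-mod}\boxtimes A_2\text{-mod}$ from \cite[Section~5]{De}. Your elaboration of the ``direct calculation'' (that $P_\kappa$ corepresents the $\kappa$-weight-space functor, is a projective generator, and that the weight-space factorisation yields $\End_\fg(\oplus_\kappa P_\kappa)^{\op}\cong A_1\otimes A_2$) fills in exactly what the paper leaves implicit.
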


We can reformulate this result in terms of Weyl groups, based on the discussion in \ref{ResSoergel}.

\begin{corollary}\label{CorOSimp}
Let $W$ be a finite Weyl group, with factorisation $W=\prod_{i=1}^dW_i$ into irreducible Coxeter groups and generating set $S=\sqcup_{i=1}^d S_i$. For any $J\subset S$ with $J_i=J\cap S_i$, we have a $\mC$-linear equivalence
$$\cO(W,W_J)\;\simeq\;\cO(W_1,{W_1}_{J_1})\boxtimes \cO(W_2,{W_2}_{J_2})\boxtimes\,\cdots\,\boxtimes \cO(W_d,{W_d}_{J_d}).$$
\end{corollary}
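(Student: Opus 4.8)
The plan is to reduce the general $d$-fold statement to the two-fold case handled by the preceding proposition and then induct. The key observation is that the proposition, stated for a Lie-theoretic decomposition $\fg = \fg_1 \oplus \fg_2$, is really a statement about Coxeter pairs once we invoke Soergel's theorem as recalled in~\ref{ResSoergel}: the block $\cO(W,W_J)$ depends only on the isomorphism class of the Coxeter pair $(W,W_J)$. So first I would fix, for the given abstract finite Weyl group $W = \prod_{i=1}^d W_i$, an actual semisimple Lie algebra $\fg$ realising it, for instance $\fg = \bigoplus_{i=1}^d \fg_i$ where each $\fg_i$ is a simple Lie algebra whose Weyl group is $W_i$; this is possible because every irreducible finite Weyl group arises from a simple Lie algebra. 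The subset $J \subset S$ with $J_i = J \cap S_i$ determines a parabolic $W_J = \prod_i (W_i)_{J_i}$, and choosing an antidominant $\lambda$ whose stabiliser is exactly $W_J$ identifies $\cO(W,W_J)$ with the block $\cO_\lambda(\fg,\fb,\fh)$.

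Next I would run the induction on $d$. The base case $d=1$ is trivial. For the inductive step, split $\fg = \fg_1 \oplus \fg'$ with $\fg' = \bigoplus_{i=2}^d \fg_i$ and apply the proposition to this decomposition, giving
$$\cO_\lambda(\fg,\fb,\fh) \;\simeq\; \cO_{\lambda_1}(\fg_1,\fb_1,\fh_1) \boxtimes \cO_{\lambda'}(\fg',\fb',\fh').$$
Translating back through~\ref{ResSoergel}, the left factor is $\cO(W_1,(W_1)_{J_1})$ and the right factor is the block $\cO(W',W'_{J'})$ for $W' = \prod_{i=2}^d W_i$ and $J' = \bigsqcup_{i=2}^d J_i$. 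The inductive hypothesis then decomposes the right-hand factor into $\boxtimes_{i=2}^d \cO(W_i,(W_i)_{J_i})$, and combining with associativity of Deligne's tensor product $\boxtimes$ yields the claim.

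The only genuine subtlety, and the step I would be most careful about, is the compatibility between the Lie-algebraic decomposition used in the proposition and the purely combinatorial factorisation $W = \prod_i W_i$ of the Weyl group. One must check that the restricted weight $\lambda_1 = \lambda|_{\fh_1}$ is antidominant for $\fg_1$ with stabiliser exactly $(W_1)_{J_1}$, and similarly for $\lambda'$; this follows because antidominance and the stabiliser are computed root-by-root and the root system of $\fg$ is the disjoint union of those of $\fg_1$ and $\fg'$, so both conditions factor through the decomposition $\fh^\ast = \fh_1^\ast \oplus (\fh')^\ast$. Beyond this bookkeeping the argument is formal: the associativity of $\boxtimes$ is a standard property of Deligne's product \cite[Section~5]{De}, and the Lie-theoretic realisation merely serves to make the proposition applicable. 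I would therefore present the corollary as an immediate consequence of the proposition together with the independence of blocks from the chosen realisation, with the weight-restriction check being the one point worth stating explicitly.
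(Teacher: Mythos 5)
Your proposal is correct and follows essentially the same route as the paper: the paper states the corollary as an immediate reformulation of the preceding proposition via Soergel's theorem (as recalled in~\ref{DefO} and~\ref{ResSoergel}), which is exactly your realisation-plus-induction argument with the details (choice of $\fg=\bigoplus_i\fg_i$, antidominant $\lambda$ with stabiliser $W_J$, restriction of weights, associativity of $\boxtimes$) spelled out. The weight-restriction compatibility you flag is indeed the only point the paper leaves implicit, and your justification of it is sound.
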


\subsection{Main result}

\begin{theorem}\label{Thm4}
Let $(W,S)$ and $(U,T)$ be finite Weyl groups and take $J\subset S$ and $K\subset T$. Consider the factorisations into irreducible Weyl groups
\begin{equation}W\;\stackrel{J}{=}\;\prod_{i=1}^{n_1}W_i\quad\mbox{and}\quad U\;\stackrel{K}{=}\;\prod_{i=1}^{n_2}U_i.\label{Fact2Weyl}\end{equation} The following properties are equivalent
\begin{enumerate}
\item There exists a $\mC$-linear equivalence $\cO(W,W_J)\simeq \cO(U,U_K)$.
\item There exists an order isomorphism $(W^J,\le_B)\simeq (U^K,\le_B)$.
\item We have $n_1=n_2$ and can (re)order the factors in \eqref{Fact2Weyl}
such that we have order isomorphisms $(W_i^{J_i},\le_B)\simeq (U_i^{K_i},\le_B)$ for all $i$.
\item We have $n_1=n_2$ and can (re)order the factors in \eqref{Fact2Weyl}
such that we have $\mC$-linear equivalences $\cO(W_i,{W_i}_{J_i})\simeq \cO(U_i,{U_i}_{K_i})$ for all $i$.
\end{enumerate}
\end{theorem}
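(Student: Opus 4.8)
The plan is to prove the four conditions equivalent by establishing the cycle $(1)\Rightarrow(2)\Rightarrow(3)\Rightarrow(4)\Rightarrow(1)$, which simultaneously realises the advertised reduction to simple Lie algebras: the pair of arrows linking (1) and (4) is the reduction at the level of categories, while the pair linking (2) and (3) is the reduction at the level of Bruhat posets. Three of the four arrows are direct consequences of results already established, so the substance of the argument lies in the passage $(3)\Rightarrow(4)$ from isomorphic Bruhat posets of irreducible factors to equivalences of the corresponding blocks.

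I would dispatch the routine arrows as follows. For $(1)\Rightarrow(2)$, recall from \ref{ResSoergel} that each block of $\cO$ is the module category of a finite dimensional algebra with simple preserving duality, so Theorem~\ref{ThmQH} forces any $\mC$-linear equivalence $\cO(W,W_J)\simeq\cO(U,U_K)$ to respect the highest weight structure; since the essential order of that structure is the Bruhat order on the parabolic quotient, the equivalence restricts on simple objects to an order isomorphism $(W^J,\le_B)\simeq(U^K,\le_B)$, exactly as in \ref{CorO}. For $(2)\Rightarrow(3)$, I would apply Theorem~\ref{Thm2}: the Coxeter graph of a finite Weyl group has finitely many components, so an isomorphism of Bruhat posets forces $n_1=n_2$ and a matching of irreducible factors with $(W_i^{J_i},\le_B)\simeq(U_i^{K_i},\le_B)$ for all $i$, the factors suppressed in the $\stackrel{J}{=}$-notation contributing singleton posets. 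For $(4)\Rightarrow(1)$, I would invoke Corollary~\ref{CorOSimp}, which presents each block as a Deligne tensor product over its essential irreducible factors, the suppressed factors contributing copies of $\mathrm{Vec}\simeq\cO(W_i,W_i)$, the unit for $\boxtimes$; a factorwise family of equivalences then assembles, by functoriality of $\boxtimes$, into the desired global equivalence.

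The genuine content, and the step I expect to be the main obstacle, is $(3)\Rightarrow(4)$. For each matched index $i$ we are handed an order isomorphism between the Bruhat posets of two irreducible finite Weyl group pairs, and I would feed this into the classification Theorem~\ref{Thm3}. It forces $(W_i,{W_i}_{J_i})$ and $(U_i,{U_i}_{K_i})$ either to be isomorphic as Coxeter pairs, whence $\cO(W_i,{W_i}_{J_i})\simeq\cO(U_i,{U_i}_{K_i})$ by Soergel's theorem (\cite{So}, see \ref{DefO}), or else to fall into one of the finitely many exceptional families of Theorem~\ref{Thm3}(1)--(5). In each exceptional family the two blocks are already known to be equivalent; these are precisely the equivalences recorded in Theorem~2, with case (5) being the maximally singular blocks, all equivalent to $\mathrm{Vec}$. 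In every case the matched irreducible blocks are equivalent, which is statement (4). The difficulty is that this arrow is not self-contained: it consumes the full strength of the poset classification Theorem~\ref{Thm3}, and the exceptional category equivalences themselves must be imported as external input rather than read off from the Bruhat data alone.
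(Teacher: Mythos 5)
Your proposal is correct and follows essentially the same route as the paper: the cycle $(1)\Rightarrow(2)\Rightarrow(3)\Rightarrow(4)\Rightarrow(1)$, with \ref{CorO}, Theorem~\ref{Thm2} and Corollary~\ref{CorOSimp} supplying the routine arrows, and $(3)\Rightarrow(4)$ handled by reducing to irreducible factors, invoking the classification of Theorem~\ref{Thm3}, using Soergel's theorem for isomorphic Coxeter pairs, and importing the exceptional equivalences as external input. The paper makes those imports precise---cases (3) and (4) of Theorem~\ref{Thm3} via \cite[Section~5]{St}, and cases (1) and (2) via Koszul duality and \cite[\S 1.7]{ES}---which is the only detail your sketch leaves unspecified.
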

\begin{proof}
That (1) implies (2) is observed in \ref{CorO}. That (2) implies (3) is a special case of Theorem~\ref{Thm2}. That (4) implies (1) follows from Corollary~\ref{CorOSimp}.

In order to show that (3) implies (4) it suffices to prove that for two irreducible finite Weyl groups $W$ and $U$ an isomorphism $(W^J,\le_B)\simeq (U^K,\le_B)$ implies an equivalence $\cO(W,W_J)\simeq \cO(U,U_K)$. In other words, we need to prove an equivalence for each of the order isomorphisms (1)-(4) listed in Theorem~\ref{Thm3}. The equivalences for \ref{Thm3}(3) and (4) follow from the explicit description in \cite[Section~5]{St}.
The equivalences for \ref{Thm3}(1) and (2) are well known. For instance, the Koszul duals (see \cite{BGS}) of the two algebras describing the respecting blocks are observed to be Morita equivalent in \cite[\S 1.7]{ES}.
\end{proof}

\subsection{Outlook: Generalisations of category $\cO$}

\subsubsection{Kac-Moody algebras }\label{KM} Theorem~\ref{ThmQH} extends immediately to the setting of {\em upper finite highest weight categories} of \cite[Section~3.3]{BS}. Concretely, if we have a simple preserving duality on such a category, then \eqref{BGG} is still valid and we have maximal elements in the poset. This is all that is required for the proof. This means that our results could be relevant also in the study of category $\cO$ for Kac-Moody algebras, see \cite[Section~5.2]{BS}.

\subsubsection{Lie superalgebras} Theorem~\ref{ThmQH} does {\bf not} extend to the setting of {\em essentially finite highest weight categories} of \cite[Section~3.2]{BS}. The proof does not extend, due to lack of maximal elements in the poset. A concrete counter example to Theorem~\ref{ThmQH} in that setting is given by category $\cO$ for a basic classical Lie superalgebra, which is an essentially finite highest weight category with a simple preserving duality, see~\cite[Section~5.5]{BS}. Different (non-conjugate) Borel subalgebras with same underlying even Borel subalgebra can lead to non-equivalent highest weight structures.

Some results on the classification of the blocks in category $\cO$ for $\mathfrak{gl}(m|n)$ are obtained in \cite{BG, CS}. We hope that the full classification for Lie algebras can help in the classification problem for $\mathfrak{gl}(m|n)$.

\subsubsection{Root reductive Lie algebras} 
Similarly to \ref{KM}, category $\cO$ for $\mathfrak{gl}(\infty)$ has blocks which are upper finite highest weight categories in the sense of \cite[Section~3.3]{BS}. Our methods hence apply to that case. As a concrete example, the methods in Section~\ref{SecUnique} show that the blocks compared in \cite[Lemma~4.7.2]{CP} are not equivalent.
\appendix

\section{The finite Weyl groups}\label{App}

We list the Coxeter graphs for all finite (irreducible) Weyl groups. We also include the length $\ell(w_0)$ of the longest element.

\vspace{3mm}

\begin{center}

\begin{tikzpicture}
\draw[fill=black] 
(-1,0) node {$A_n$}

(0,0)                         
      circle [radius=.1] node [above] {} --
(1,0) 
      circle [radius=.1] node [above] {} --
(2,0) 
      circle [radius=.1] node [above] {}      
(3.5,0)
    node {$\cdots$}
    (5,0)      
    circle [radius=.1] node [above] {} --
      (6,0)      
    circle [radius=.1] node [above] {}            
    (8,0)
    node {$n\ge 1$}
    
    (12,0) node {$\ell(w_0)=\frac{n(n+1)}{2}$}
;
\end{tikzpicture}

\vspace{2mm}

\begin{tikzpicture}
\draw[fill=black] 
(-1,0) node {$B_n$}

(0,0)                         
      circle [radius=.1]  --
(1,0) 
      circle [radius=.1] --
(2,0) 
      circle [radius=.1]     
(3,0)
    node {$\cdots$}
    (4,0)      
    circle [radius=.1] --
    (5,0)      
    circle [radius=.1] --node [midway,above] {$4$}
      (6,0)      
    circle [radius=.1]       
    (8,0)
    node {$n\ge 2$}     
    
     (12,0) node {$\ell(w_0)=n^2$}
;
\end{tikzpicture}

\vspace{2mm}

\begin{tikzpicture}
\draw[fill=black] 
(-1,0) node {$D_n$}

(0,0)                         
      circle [radius=.1] node [above] {} --
(1,0) 
      circle [radius=.1] node [above] {}
(2,0) 
    node {$\cdots$}
(3,0)      
    circle [radius=.1] node [above] {} --
      (4,0)      
    circle [radius=.1] node [above] {}     
    
(4,0) 
      circle [radius=.1] node [above] {}      
            
(4,0) --++ (30:1.15)
      circle [radius=.1] 
      
(4,0) --++ (-30:1.15)
      circle [radius=.1]      
      (8,0)
    node {$n\ge 4$} 
      (12,0) node {$\ell(w_0)=n^2-n$}

; 

\end{tikzpicture}

\vspace{2mm}

\begin{tikzpicture}
\draw[fill=black] 
(-1,0) node {$E_6$}

(0,0)                         
      circle [radius=.1]  --
(1,0) 
      circle [radius=.1] node [above] {} --
(2,0) 
      circle [radius=.1] node [above] {}    --  
    (3,0)      
    circle [radius=.1] node [above] {} --
      (4,0)      
    circle [radius=.1] 
    (2,0) --++(90:1)         circle [radius=.1]    
    
    (12,0) node {$\ell(w_0)=36$}
;
\end{tikzpicture}

\vspace{2mm}

\begin{tikzpicture}
\draw[fill=black] 
(-1,0) node {$E_7$}

(0,0)                         
      circle [radius=.1]  --
(1,0) 
      circle [radius=.1] node [above] {} --
(2,0) 
      circle [radius=.1] node [above] {}    --  
    (3,0)      
    circle [radius=.1] node [above] {} --
      (4,0)      
    circle [radius=.1] --
    (5,0)      
    circle [radius=.1]
    (2,0) --++(90:1)         circle [radius=.1]    
        (12,0) node {$\ell(w_0)=63$}

;
\end{tikzpicture}

\vspace{2mm}

\begin{tikzpicture}
\draw[fill=black] 
(-1,0) node {$E_8$}

(0,0)                         
      circle [radius=.1]  --
(1,0) 
      circle [radius=.1] node [above] {} --
(2,0) 
      circle [radius=.1] node [above] {}    --  
    (3,0)      
    circle [radius=.1] node [above] {} --
      (4,0)      
    circle [radius=.1] --
      (5,0)      
    circle [radius=.1] --
      (6,0)      
    circle [radius=.1] 

    (2,0) --++(90:1)         circle [radius=.1]    
        (12,0) node {$\ell(w_0)=120$}

;
\end{tikzpicture}

\vspace{2mm}

\begin{tikzpicture}
\draw[fill=black] 
(-1,0) node {$F_4$}

(0,0)                         
      circle [radius=.1]  --
(1,0) 
      circle [radius=.1] --node [midway,above] {$4$}
(2,0) 
      circle [radius=.1] --    
(3,0)
    circle [radius=.1] 
       (12,0) node {$\ell(w_0)=24$}
    ;\end{tikzpicture}
    
    \begin{tikzpicture}     
    \draw[fill=black] 
    (6,0) node {$G_2$}
    (7,0) 
      circle [radius=.1] --node [midway,above] {$6$}
(8,0) 
      circle [radius=.1]
      
         (19,0) node {$\ell(w_0)=6$}
;
\end{tikzpicture}

\end{center}

\subsection*{Acknowledgement}
The research was supported by the ARC grant DE170100623. The author thanks Chih-Whi Chen and Geordie Williamson for interesting discussions.

\end{document}